\renewcommand{\phi}[0]{\varphi}
\renewcommand{\theta}[0]{\vartheta}
\renewcommand{\epsilon}[0]{\varepsilon}
\newcommand{\N}{\text{$\mathbb{N}$}}
\newcommand{\Z}{\text{$\mathbb{Z}$}}
\newcommand{\tab}{$\theta_{a,b}$}
\newcommand{\dsum}{\displaystyle\sum}
\DeclareMathOperator{\Ap}{Ap}
\newtheorem{theorem}{Theorem}[section]
\newtheorem{lemma}[theorem]{Lemma}
\newtheorem{corollary}[theorem]{Corollary}
\theoremstyle{definition}
\newtheorem{definition}[theorem]{Definition}
\newtheorem{example}[theorem]{Example}
\theoremstyle{remark}
\numberwithin{equation}{section}
\begin{document}

\bibliographystyle{amsplain}

\author{S. Ugolini}
\address{Dipartimento di Matematica, Universit\`{a} degli studi di Trento, Via Sommarive 14, I-38123 (Italy)}
\email{s.ugolini@unitn.it}

\date{}
\keywords{Numerical semigroups, Diophantine equations, Frobenius problem}
\subjclass[2010]{11A05, 11A63, 11D04, 11D07, 20M14}

\title[]
{On numerical semigroups closed with respect to the action of affine maps}

\begin{abstract}
In this paper we study numerical semigroups containing a given positive integer and closed with respect to the action of an affine map. For such semigroups we find a minimal set of generators, their embedding dimension, their genus and their Frobenius number.
\end{abstract}

\maketitle

\section{Introduction}
A numerical semigroup $G$ is a subsemigroup of the semigroup of non-negative integers $(\N, +)$ containing $0$ and such that $\N \backslash G$ is finite. 
A comprehensive introduction to numerical semigroups is given in \cite{ros_ns}. Nevertheless, for the reader's convenience we  recall some basic notions, we will make use of in the current paper.

A set $S \subseteq \N$ generates a numerical semigroup $G$, namely $G = \langle S \rangle$, if and only if 
\begin{equation*}
\gcd(S) = 1,
\end{equation*}
where $\gcd(S)$ is the greatest common divisor of the elements contained in $S$. 

Any numerical semigroup $G$ has a unique finite minimal set of generators, whose cardinality is the embedding dimension $e(G)$ of $G$.

The cardinality of $\N \backslash G$ is called the genus of $G$ and is denoted by $g(G)$, while the integer
\begin{equation*}
F(G) := \max \{ x : x \in \Z \backslash G \}
\end{equation*} 
is called the Frobenius number of $G$. 

If $n \in G \backslash \{ 0 \}$, then the set 
\begin{equation*}
\Ap(G,n) = \{ s \in G: s - n \not \in G \}
\end{equation*} 
is called the Ap\'ery set of $G$ with respect to $n$.

For any $a \in \N^* := \N \backslash \{ 0 \}$ and $b \in \N$ we define the affine map
\begin{displaymath}
\begin{array}{rcl}
\theta_{a,b} : \N & \to & \N \\
x & \mapsto & ax+b.
\end{array}
\end{displaymath}

We give the following definition.

\begin{definition}\label{tab_def}
A subsemigroup $G$ of $(\N, +)$ containing $0$ is a $\theta_{a,b}$-semigroup if $\theta_{a,b}(y) \in G$ for any $y \in G \backslash \{ 0 \}$.
\end{definition}

The  problem we deal with in the paper consists in finding the smallest \tab-semigroup $G_{a,b} (c)$ containing a given integer $c \in \N \backslash \{0, 1 \}$, once two positive integers $a$ and $b$ such that $\gcd(b,c)=1$ are chosen. We notice that under such hypotheses $G_{a,b} (c)$ is a numerical semigroup, while the same does not hold if $\gcd(b,c)>1$. Indeed, if $d:=\gcd(b,c) > 1$, then all elements in $G$ are divisible by $d$ and $G$ is not co-finite.
The existence and the structure of $G_{a,b} (c)$ are dealt with in Theorem \ref{t_g_is_h}. 

In literature some special cases of \tab-semigroups have been studied. 

In \cite{ros} the authors studied  Thabit numerical semigroups, namely numerical semigroups defined for any $n \in \N^*$ as
\begin{equation*}
T(n) := \langle \left\{ 3 \cdot 2^{n+i} - 1 :  i \in \N \right\} \rangle.
\end{equation*}
Indeed, if we set $c:=3 \cdot 2^n-1$, then $T(n) = G_{2,1} (c)$. 

Also Mersenne numerical semigroups \cite{ros_m}, namely numerical semigroups defined for any $n \in \N^*$ as 
\begin{equation*}
M(n) := \langle \left\{ 2^{n+i} - 1 :  i \in \N \right\} \rangle,
\end{equation*}
are $\theta_{2,1}$-semigroups. In this case, setting $c:=2^n-1$, we have that $M(n) = G_{2,1} (c)$.

In \cite{ros_rep}, for a given integer $b \in \N \backslash \{0, 1 \}$ and a given positive integer $n$, the authors defined   
\begin{equation*}
M(b,n) := \langle \left\{ b^{n+i} - 1 :  i \in \N \right\} \rangle
\end{equation*}
as a submonoid of $(\N, +)$. If we set $c := b^n-1$, then $M(b,n) = G_{b,b-1} (c)$. We notice that this latter is not a numerical semigroup. Indeed, we have that $\gcd(b-1,c) \not = 1$. 
\subsection*{Synopsis of the paper}
The paper is organized as follows.
\begin{itemize}
\item In Section \ref{notations} we introduce some notations.
\item In Section \ref{main_results} we present (omitting the proofs) the main results of the paper (Theorem \ref{t_g_is_h}, Theorem \ref{t_m_gen}, Corollary \ref{c_e_f} and Theorem \ref{gen}). Some examples of \tab-semigroups follow.
\item Section \ref{background} and \ref{proofs} contain all the necessary background and proofs supporting the results presented in Section \ref{main_results}. In particular, Section \ref{proofs} consists of the proofs of Theorem \ref{t_g_is_h}, Theorem \ref{t_m_gen} and Theorem \ref{gen}.
\end{itemize}

\section{Definitions and notations}\label{notations}
Let $\{a, b, c \} \subseteq \N^*$. 
\begin{itemize}
\item If $y$ and $z$ are two non-negative integers such that $y < z$, then 
\begin{align*}
[y, z[ & :=  \{x \in \N: y \leq x < z \}; \\
\left[y, z \right] & :=  \{x \in \N: y \leq x \leq z \}; \\
\left[y,+\infty \right[ & :=  \{x \in \N: y \leq x  \}.
\end{align*}

\item If $k \in \N$, then we define 
\begin{displaymath}
s_k(a) := 
\begin{cases}
0 & \text{if $k=0$,}\\
\sum_{i=0}^{k-1} a^i & \text{otherwise,}
\end{cases}
\end{displaymath}
and accordingly 
\begin{displaymath}
t_k(a,b,c) := a^k c + b \cdot s_k(a).
\end{displaymath}
Moreover, we define the set
\begin{equation*}
S (a,b,c) := \lbrace t_k(a,b,c) : k \in \N \rbrace
\end{equation*}
and, for any non-negative integer $\tilde{k}$,
\begin{equation*}
S_{\tilde{k}} (a,b,c) := \lbrace t_k(a,b,c) : k \in \N \text{ and } 0 \leq k \leq \tilde{k} \rbrace.
\end{equation*}

\item We denote by $H(a,b,c)$ the semigroup generated by $S(a,b,c)$, namely 
\begin{equation*}
H(a,b,c) := \langle S(a,b,c) \rangle.
\end{equation*}
If $\gcd(b,c) = 1$, then $\gcd(c,ac+b)=1$ too. Therefore, since $\{ c, ac+b \} \subseteq S(a,b,c)$, we have that $\gcd(S(a,b,c))=1$ and $S(a,b,c)$ generates a numerical semigroup.

\item If $K$ is a non-empty finite subset of $\N$ and $\tilde{k} := \max \{k \in K \}$, then we say that a set $\{ j_i \}_{i \in K}$ of non-negative integers is $a$-reduced if 
\begin{itemize}
\item $j_i \in [0,a]$ for any $i \in K \backslash \{ 0 \}$;
\item $j_{\tilde{k}} \not = 0$; 
\item if $j_k = a$ for some $k \in K \backslash \{ 0 \}$, then $j_i=0$ for any $i \in K \backslash \{ 0 \}$ such that $i < k$.
\end{itemize} 
\item If $\{ j_i \}_{i \in K_1}$ and $\{ \tilde{j}_i \}_{i \in K_2}$ are two $a$-reduced sets of integers indexed on two subsets $K_1$ and $K_2$ of $\N^*$ such that
\begin{align*}
k_1 & :=  \max \{k \in K_1 \},\\
k_2 & :=  \max \{k \in K_2 \},
\end{align*}
then we say that
\begin{itemize}
\item $\{ j_i \}_{i \in K_1} = \{ \tilde{j}_i \}_{i \in K_2}$ if and only if $K_1 = K_2$ and $j_i = \tilde{j}_i$ for any $i \in K_1$;
\item $\{ j_i \}_{i \in K_1} \prec \{ \tilde{j}_i \}_{i \in K_2}$ if and only if $k_1 < k_2$ or $k_1 = k_2$ and $j_M < \tilde{j}_M$, where $M := \max \{ k \in K_1 : j_k \not =  \tilde{j}_k \}$. 
\end{itemize}
\end{itemize}

\section{Main results and examples}\label{main_results}
In this and the following sections $\{a, b, c \}$ is a subset of $\N^*$, where $c \geq 2$ and $\gcd(b,c) = 1$.

The following holds.
\begin{theorem}\label{t_g_is_h}
We have that $G_{a,b}(c) = H(a,b,c)$.
\end{theorem}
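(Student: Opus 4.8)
The plan is to establish the two inclusions $H(a,b,c) \subseteq G_{a,b}(c)$ and $G_{a,b}(c) \subseteq H(a,b,c)$ separately, after first noting that $G_{a,b}(c)$ is well defined. Indeed, the family of $\theta_{a,b}$-semigroups containing $c$ is nonempty, since $\N$ itself is one, and it is closed under intersection: an arbitrary intersection of subsemigroups of $(\N,+)$ containing $0$ is again such a subsemigroup, and the closure condition of Definition \ref{tab_def} visibly passes to intersections. Hence the intersection of this family is the smallest $\theta_{a,b}$-semigroup containing $c$, which is $G_{a,b}(c)$.

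For $H(a,b,c) \subseteq G_{a,b}(c)$, I would show by induction on $k$ that $t_k(a,b,c) \in G_{a,b}(c)$ for every $k \in \N$. The base case is $t_0(a,b,c) = c \in G_{a,b}(c)$. For the inductive step one checks, from the definitions of $s_k$ and $t_k$, the recursion $t_{k+1}(a,b,c) = a\, t_k(a,b,c) + b = \theta_{a,b}\bigl(t_k(a,b,c)\bigr)$; since $t_k(a,b,c) \geq c \geq 2$ lies in $G_{a,b}(c) \setminus \{0\}$, the closure property yields $t_{k+1}(a,b,c) \in G_{a,b}(c)$. Therefore $S(a,b,c) \subseteq G_{a,b}(c)$, and as $G_{a,b}(c)$ is a semigroup, $H(a,b,c) = \langle S(a,b,c) \rangle \subseteq G_{a,b}(c)$.

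For the reverse inclusion, since $G_{a,b}(c)$ is the smallest $\theta_{a,b}$-semigroup containing $c$, it suffices to verify that $H(a,b,c)$ is itself a $\theta_{a,b}$-semigroup containing $c$. It is a subsemigroup of $(\N,+)$ containing $0$ by construction, and it contains $c = t_0(a,b,c)$. The step to check is closure under $\theta_{a,b}$. Given $y \in H(a,b,c) \setminus \{0\}$, write $y = t_{k_1}(a,b,c) + \dots + t_{k_N}(a,b,c)$ as a sum of (not necessarily distinct) generators with $N \geq 1$. Using the recursion $a\, t_{k_1}(a,b,c) + b = t_{k_1+1}(a,b,c)$ and observing that for $i \geq 2$ the integer $a\, t_{k_i}(a,b,c)$ is the sum of $a \geq 1$ copies of $t_{k_i}(a,b,c)$, hence lies in $H(a,b,c)$, one gets
\begin{equation*}
\theta_{a,b}(y) = a y + b = t_{k_1+1}(a,b,c) + \sum_{i=2}^{N} a\, t_{k_i}(a,b,c) \in H(a,b,c),
\end{equation*}
which completes the argument.

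The only delicate point, and the one I would flag as the main obstacle, is precisely this last closure computation: $\theta_{a,b}$ adds a single constant $b$ while $y$ may be a sum of many generators, so one cannot apply $\theta_{a,b}$ termwise. The resolution is to peel off one generator $t_{k_1}$, use $t_{k+1} = a t_k + b$ to absorb the constant into that single term, and exploit that multiplication by $a \in \N^*$ is iterated addition, so all the remaining terms $a\, t_{k_i}$ stay inside $H(a,b,c)$ automatically. Everything else is routine bookkeeping with the definitions of $s_k$ and $t_k$, and the fact that $H(a,b,c)$ is a numerical semigroup when $\gcd(b,c)=1$ then follows from the resulting equality together with the remark made before the theorem.
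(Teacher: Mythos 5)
Your proof is correct and follows essentially the same route as the paper: the inclusion $H(a,b,c) \subseteq G_{a,b}(c)$ is exactly the induction of Lemma \ref{l_pre_0} via $t_{k+1}(a,b,c) = \theta_{a,b}(t_k(a,b,c))$, and your peel-off closure computation for $ay+b$ is precisely the argument of Lemma \ref{l_pre_1}, followed by the same minimality conclusion. The only minor difference is that the paper certifies co-finiteness of $H(a,b,c)$ through Lemma \ref{l_pre_3}, whereas you invoke the $\gcd(S(a,b,c))=1$ remark, which is equally adequate for that point.
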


In the following theorem a minimal set of generators for $G_{a,b} (c)$ is provided.

\begin{theorem}\label{t_m_gen}
Let $\tilde{k} := \min \{k \in \N : s_k(a) > c - 1  \}$.
 
Then  $S_{\tilde{k}-1} (a,b,c)$ is a minimal set of generators for $G_{a,b}(c)$.
\end{theorem}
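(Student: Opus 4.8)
The plan is to prove the two halves of the statement separately: that $S_{\tilde k-1}(a,b,c)$ generates $G_{a,b}(c)$, and that it does so minimally. By Theorem~\ref{t_g_is_h} we may replace $G_{a,b}(c)$ by $H(a,b,c)=\langle S(a,b,c)\rangle$ throughout, so the first half reduces to showing $t_k(a,b,c)\in\langle S_{\tilde k-1}(a,b,c)\rangle$ for every $k\ge\tilde k$ (for $k\le\tilde k-1$ there is nothing to prove).

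The engine of the argument is the identity $t_k(a,b,c)=c+s_k(a)\,d$, where $d:=(a-1)c+b$; it follows from $1+(a-1)s_k(a)=a^k$. Two easy consequences: $\gcd(c,d)=\gcd(c,b)=1$, and $(t_k)_{k\ge 0}$ is strictly increasing because $t_{k+1}-t_k=a^k d>0$. In particular every element of $\langle S_{\tilde k-1}(a,b,c)\rangle$ can be written as $Nc+Md$ with $N=\sum_{i=0}^{\tilde k-1}m_i$ and $M=\sum_{i=0}^{\tilde k-1}m_i s_i(a)$ for suitable non-negative integers $m_0,\dots,m_{\tilde k-1}$. Now fix $k\ge\tilde k$, so $s_k(a)\ge c$; set $\ell:=\lfloor s_k(a)/c\rfloor\ge 1$ and $M:=s_k(a)-\ell c\in[0,c-1]$, and observe $t_k=(\ell d+1)c+Md$. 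If $a\ge 2$, this is realised by $t_k=(\ell d+1-M)\,t_0+M\,t_1$: the first coefficient is positive since $\ell d+1-M\ge d+1-(c-1)=(a-2)c+b+2>0$, and $t_0,t_1\in S_{\tilde k-1}(a,b,c)$ because $\tilde k\ge 2$ (indeed $s_1(a)=1\le c-1$ as $c\ge 2$). If $a=1$, then $s_i(1)=i$, so $\tilde k=c$ and $M\le c-1=\tilde k-1$; here $t_k=(\ell b)\,t_0+t_M$ does the job, $t_M$ being one of the listed generators. Hence $S(a,b,c)\subseteq\langle S_{\tilde k-1}(a,b,c)\rangle$, so the two semigroups coincide and the first half is done.

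For minimality it suffices to check that for each $j$ with $1\le j\le\tilde k-1$ the generator $t_j$ does not lie in the semigroup generated by the remaining elements of $S_{\tilde k-1}(a,b,c)$ ($t_0$ is trivially irredundant, being the smallest nonzero element). Suppose $t_j=\sum m_i t_i$ over indices $i\in\{0,\dots,\tilde k-1\}\setminus\{j\}$. By strict monotonicity only indices $i<j$ can occur, so $t_j=Nc+Md$ with $N=\sum_{i<j}m_i\ge 1$ and $M=\sum_{i<j}m_i s_i(a)$, whence $(N-1)c=(s_j(a)-M)\,d$. If $N=1$ then $M$ equals a single $s_{i_0}(a)$ with $i_0<j$, so $M<s_j(a)$, contradicting $M=s_j(a)$ which the equation then forces; if $N\ge 2$ then $s_j(a)-M$ is a positive integer divisible by $c$ (using $\gcd(c,d)=1$), while $0<s_j(a)-M\le s_j(a)\le s_{\tilde k-1}(a)\le c-1$, which is impossible. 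This proves minimality.

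The one place that genuinely needs care is the case distinction in the generation step: the clean two-term expression $(\ell d+1-M)\,t_0+M\,t_1$ has a non-negative leading coefficient only when $a\ge 2$, so the degenerate case $a=1$ — in which $G_{a,b}(c)$ is the familiar object $\langle\{c+ib:i\ge 0\}\rangle$ whose first $c$ elements form its minimal system of generators — must be treated on its own as above. Everything else is routine manipulation of the identity $t_k=c+s_k(a)\,d$ together with the coprimality $\gcd(c,d)=1$ and the monotonicity of $(t_k)_k$.
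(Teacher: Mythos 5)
Your proof is correct, and it takes a genuinely different route from the paper's. The paper splits the work between two technical lemmas: minimality comes from Lemma \ref{l_pre_5}, a multi-case analysis comparing $\sum_i j_i a^i$ with $a^k$ and $\sum_i j_i s_i(a)$ with $s_k(a)$ (with separate subcases for $a=1$), while generation is obtained by dividing $s_{\tilde k}(a)$ by $c$ and invoking the interval sets $I_k(a,b,c)$ of Lemma \ref{l_pre_2} to place $t_{\tilde k}(a,b,c)$ in $\langle S_{\tilde k-1}(a,b,c)\rangle$. You instead linearize everything through the identity $t_k(a,b,c)=c+s_k(a)\,d$ with $d=(a-1)c+b$ and $\gcd(c,d)=1$: generation becomes an explicit two-term representation of every $t_k(a,b,c)$ with $k\ge\tilde k$ in terms of $t_0$ and $t_1$ (or $t_0$ and $t_M$ when $a=1$), and minimality becomes a short divisibility argument --- the relation $(N-1)c=(s_j(a)-M)d$ together with $\gcd(c,d)=1$ forces $c\mid s_j(a)-M$, incompatible with $0<s_j(a)-M\le s_{\tilde k-1}(a)\le c-1$ --- with the strict monotonicity of $(t_k)_k$ replacing the inequality cases of Lemma \ref{l_pre_5}. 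I checked the details (positivity of the coefficient $\ell d+1-M$ for $a\ge 2$, the fact that $\tilde k\ge 2$ so $t_1$ is indeed among the listed generators, the $a=1$ branch where $\tilde k=c$ and $M\le\tilde k-1$, and the $N=1$ versus $N\ge 2$ split in the minimality step) and they are sound. Your approach buys brevity and self-containment (only Theorem \ref{t_g_is_h} is used), it treats all $k\ge\tilde k$ explicitly whereas the paper's write-up only exhibits $t_{\tilde k}(a,b,c)$ and leaves the larger generators implicit, and it yields the stronger observation that for $a\ge 2$ every $t_k(a,b,c)$ with $k\ge\tilde k$ already lies in $\langle t_0,t_1\rangle=\langle c, ac+b\rangle$. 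What the paper's heavier machinery buys is reuse elsewhere: Lemma \ref{l_pre_2} also feeds Lemma \ref{l_pre_3} and Theorem \ref{t_g_is_h}, and the $a$-reduced bookkeeping underlying Lemma \ref{l_pre_5} reappears in the Ap\'ery set computation of Theorem \ref{gen}.
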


As an immediate consequence of Theorem \ref{t_m_gen} we obtain the embedding dimension of $G_{a,b} (c)$, since for each $\tilde{k} \in \N$ we have that $|S_{\tilde{k}} (a,b,c)| = \tilde{k}+1$. 

\begin{corollary}\label{c_e_f}
Let $\tilde{k} := \min \{k \in \N : s_k(a) > c - 1  \}$.
 
Then $e (G_{a,b}(c)) = \tilde{k}$.
\end{corollary}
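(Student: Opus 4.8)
The plan is to deduce the corollary directly from Theorem \ref{t_m_gen} and the definition of embedding dimension, the only genuine content being a cardinality count. Recall that $e(G)$ is by definition the number of elements in the unique minimal system of generators of a numerical semigroup $G$. By Theorem \ref{t_m_gen} this system for $G_{a,b}(c)$ is $S_{\tilde k - 1}(a,b,c)$, so it suffices to show $|S_{\tilde k - 1}(a,b,c)| = \tilde k$.

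First I would check that $\tilde k$ is a well-defined positive integer, so that $S_{\tilde k - 1}(a,b,c)$ is meaningful. Since $a \geq 1$ we have $s_k(a) = \sum_{i=0}^{k-1} a^i \geq k$, so $s_k(a) \to \infty$ and $\{ k \in \N : s_k(a) > c - 1 \}$ is non-empty; thus $\tilde k$ exists. Moreover $s_0(a) = 0 \leq c - 1$ since $c \geq 2$, so $0$ does not lie in that set and hence $\tilde k \geq 1$, whence $\tilde k - 1 \in \N$ and $S_{\tilde k - 1}(a,b,c)$ is defined.

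Next I would invoke the counting identity recorded just before the statement: for every $\tilde m \in \N$ one has $|S_{\tilde m}(a,b,c)| = \tilde m + 1$, because $k \mapsto t_k(a,b,c) = a^k c + b\, s_k(a)$ is strictly increasing on $\N$ (both $a^k c$ and $s_k(a)$ are non-decreasing in $k$, and at least one of them increases strictly at each step since $b, c \geq 1$; in particular when $a = 1$ one has $t_k(a,b,c) = c + bk$, strictly increasing because $b \geq 1$), so the $t_k$ are pairwise distinct. Taking $\tilde m = \tilde k - 1$ gives $|S_{\tilde k - 1}(a,b,c)| = \tilde k$, and combining this with Theorem \ref{t_m_gen} yields $e(G_{a,b}(c)) = \tilde k$.

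There is no real obstacle here: all the substance has already been carried out in Theorem \ref{t_m_gen}. The single point worth a line of care is the strict monotonicity of $k \mapsto t_k(a,b,c)$, in particular the degenerate case $a = 1$, which is precisely what guarantees that $|S_{\tilde k - 1}(a,b,c)|$ equals the number of indices, namely $\tilde k$, rather than something smaller.
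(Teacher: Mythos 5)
Your proposal is correct and follows exactly the paper's route: the corollary is read off from Theorem \ref{t_m_gen} together with the observation that $|S_{\tilde{k}}(a,b,c)| = \tilde{k}+1$, which the paper states immediately before the corollary. Your extra checks (that $\tilde{k}\geq 1$ exists and that $k \mapsto t_k(a,b,c)$ is strictly increasing, so the generators are pairwise distinct) are sound and merely make explicit what the paper leaves implicit.
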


In the following theorem we determine the Frobenius number $F(G_{a,b} (c))$ and the genus $g(G_{a,b} (c))$ of $G_{a,b} (c)$.

\begin{theorem}\label{gen}
For any $l \in [1,c-1]$ there exists and is unique an $a$-reduced set of integers $\{ j_i^{(l)} \}_{i=1}^{k_l}$, for some positive integer $k_l$, such that 
\begin{equation*}
l = \sum_{i=1}^{k_l} j_i^{(l)} \cdot s_i(a).
\end{equation*}

Moreover, if  we define 
\begin{displaymath}
x_l :=
\begin{cases}
0 & \text{if $l=0$,}\\
\sum_{i=1}^{k_l} j_i^{(l)} \cdot t_i (a,b,c) & \text{if $l \in [1,c-1]$,}
\end{cases}
\end{displaymath}
then the following hold:
\begin{enumerate}
\item $x_l  =  \min \{x \in G_{a,b} (c) : x \equiv b l \pmod{c}  \}$;
\item $\Ap (G_{a,b} (c),c)  = \{x_l : l \in [0,c-1] \}$;
\item $F(G_{a,b} (c))  = x_{c-1} - c$;
\item $g(G_{a,b} (c))  = \frac{1}{c} \cdot \sum_{l=1}^{c-1} x_l - \frac{c-1}{2}$.
\end{enumerate}
\end{theorem}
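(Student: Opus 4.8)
The plan is to compute the Ap\'ery set $\Ap(G_{a,b}(c),c)$ explicitly and then deduce (3) and (4) from the classical identities $F(G)=\max\Ap(G,c)-c$ and $g(G)=\frac1c\sum_{w\in\Ap(G,c)}w-\frac{c-1}{2}$ (see e.g. \cite{ros_ns}); I expect the reduction lemma below, together with a monotonicity statement, to be the main obstacle, the rest being bookkeeping. The first step is the existence and uniqueness of the $a$-reduced expansion. By induction on the top index $\kappa$ one checks that any $a$-reduced set $\{j_i\}_{i=1}^{\kappa}$ with $j_\kappa\neq 0$ satisfies $s_\kappa(a)\le\sum_{i=1}^{\kappa}j_i s_i(a)\le a\,s_\kappa(a)=s_{\kappa+1}(a)-1$, the upper bound attained only by $j_\kappa=a$, $j_i=0$ for $i<\kappa$. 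Hence for $l\ge 1$ the top index $k_l$ is forced to be the unique $\kappa$ with $s_\kappa(a)\le l<s_{\kappa+1}(a)$; then $j_{k_l}^{(l)}=\lfloor l/s_{k_l}(a)\rfloor\in[1,a]$, the remainder lies in $[0,s_{k_l}(a)-1]$ and vanishes whenever the quotient equals $a$, and one recurses. This greedy argument works for every $l\ge 1$; I write $\widehat{x}_m$ for the number $\sum_i j_i^{(m)}t_i=c\sum_i j_i^{(m)}a^{i}+b\,m$ attached to the expansion of $m\ge 0$, so that $\widehat{x}_l=x_l$ for $l\in[0,c-1]$.

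Next I record the identities on which everything rests. From $a\,s_k(a)=s_{k+1}(a)-1$, $s_j(a)-1=a\,s_{j-1}(a)$ and $a^{k}=(a-1)s_k(a)+1$ one obtains, for all $i\ge 1$ and $1\le j\le i-1$,
\[
(a+1)\,t_i=t_{i+1}+a\,t_{i-1},\qquad a\,t_i+t_j=t_{i+1}+a\,t_{j-1},
\]
and, writing any element of $G_{a,b}(c)=H(a,b,c)$ as $w=\sum_{i\ge 0}m_i t_i$ with $m_i\in\N$, also $w=m_0c+c\sum_{i\ge 1}m_i a^{i}+b\sum_{i\ge 1}m_i s_i(a)$. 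Since $t_0=c$, a minimal element of $G_{a,b}(c)$ in a given residue class mod $c$ always admits a representation with $m_0=0$; and since $t_i\equiv b\,s_i(a)\pmod c$ and $\gcd(b,c)=1$, such a minimal $w$ has $\sum_{i\ge 1}m_i s_i(a)\equiv l\pmod c$, where $l\in[0,c-1]$ indexes its class. The reduction lemma is: given any $\{m_i\}_{i\ge 1}$ in $\N$, repeatedly apply the first identity to a coefficient $\ge a+1$, or the second identity to a pattern ``$m_k=a$ with $m_j\ge 1$ for some $1\le j<k$'' (discarding any $a\,t_0=ac$ thereby produced); every step preserves $\sum_{i\ge 1}m_i s_i(a)$, does not increase $\sum_{i\ge 1}m_i t_i$, and, when $a\ge 2$, strictly decreases $\sum_{i\ge 1}i\,m_i$ on the steps that leave $\sum m_i t_i$ unchanged, so the process halts, necessarily at an $a$-reduced configuration, which by uniqueness is the expansion of $n:=\sum_{i\ge 1}m_i s_i(a)$; hence $\sum_{i\ge 1}m_i t_i\ge\widehat{x}_n$. (For $a=1$ no algorithm is needed: $s_k(1)=k$, the expansion of $n\ge 1$ is $j_n=1$, and $\sum m_i t_i=c\sum m_i+b\sum i\,m_i\ge c+bn=\widehat{x}_n$.)

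It remains to prove that $m\mapsto\widehat{x}_m$ is strictly increasing on $\N$. I would do this by analysing the ``add~$1$'' operation in the $a$-reduced numeration: passing from $m$ to $m+1$ leaves $\sum_i j_i^{(\cdot)}a^{i}$ unchanged when $j_1^{(m+1)}=0$ and increases it by $a$ when $j_1^{(m+1)}\ge 1$, so $\widehat{x}_m=c\sum_i j_i^{(m)}a^i+bm$ indeed increases; here one uses that an $a$-reduced expansion has at most one digit equal to $a$, which is exactly what makes the relevant borrow land again on an $a$-reduced configuration. Now statement (1) follows: if $x\in G_{a,b}(c)$ lies in the class $bl$ with $l\in[0,c-1]$, drop $m_0t_0$ from a representation of $x$, apply the reduction lemma, and obtain $x\ge\widehat{x}_n$ with $n\equiv l\pmod c$, hence $n\ge l$ and $x\ge\widehat{x}_n\ge\widehat{x}_l=x_l$; and $x_l$ itself lies in $G_{a,b}(c)$ in the class $bl$ (with $x_0=0$), so it is the minimum.

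Finally, statement (2) is immediate: as $l$ runs over $[0,c-1]$ the residues $bl\bmod c$ run over all classes, and $\Ap(G_{a,b}(c),c)$ is precisely the set of class-minima. Statement (3) then follows from $F(G)=\max\Ap(G,c)-c$ together with $x_{c-1}=\max\{x_l:l\in[0,c-1]\}$, which is monotonicity again, and statement (4) follows from $g(G)=\frac1c\sum_{w\in\Ap(G,c)}w-\frac{c-1}{2}$ and $x_0=0$. The technical heart of the proof is thus the reduction lemma — in particular its termination/potential bookkeeping and the need to discard the spurious $t_0$-terms — and the monotonicity of $m\mapsto\widehat{x}_m$; everything else is either routine or quoted.
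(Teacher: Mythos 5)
Your argument is correct and follows the same overall skeleton as the paper's proof: existence and uniqueness of the $a$-reduced expansion via the greedy bound $s_\kappa(a)\le\sum_i j_i\,s_i(a)<s_{\kappa+1}(a)$ (this is Lemma \ref{l_pre_5_a} together with Lemma \ref{l_pre_8}), a rewriting procedure based on the identity $a\,t_{i}(a,b,c)+t_{j}(a,b,c)=t_{i+1}(a,b,c)+a\,t_{j-1}(a,b,c)$ that turns an arbitrary representation of $x\in G_{a,b}(c)=H(a,b,c)$ into an $a$-reduced one (the paper's Lemma \ref{l_pre_5_b}, with essentially the same termination bookkeeping), and then the standard facts that $\Ap(G,c)$ consists of the class-minima and Selmer's formulas for $F$ and $g$. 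Where you genuinely diverge is the comparison step: the paper introduces the order $\prec$ on $a$-reduced sets and shows that both $\sum_i j_i\,s_i(a)$ and $\sum_i j_i\,t_i(a,b,c)$ respect it (Lemmas \ref{l_pre_9} and \ref{l_pre_7}), deducing $x_l\le x$ by a $\prec$-comparison, whereas you extend $l\mapsto x_l$ to $\widehat{x}_n$ for all $n\in\N$ and prove strict monotonicity of $n\mapsto\widehat{x}_n$ via $\widehat{x}_n=c\sum_i j_i^{(n)}a^{i}+bn$ and a carry analysis of adding $1$ in the $a$-reduced numeration, concluding from $n\equiv l\pmod c$ and $n\ge 1$ that $n\ge l$. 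The two are equivalent in content, and yours arguably yields a cleaner statement (monotonicity in the integer rather than in digit strings, which also gives the membership criterion directly); the price is the ``add $1$'' analysis, which you only sketch: to make it a proof one should write out the three borrow cases (incrementing $j_1\le a-1$ when no digit equals $a$; $j_1=a$; some $j_k=a$ with $k\ge2$ forcing the borrow to land at $k+1$), each a single-step carry precisely because an $a$-reduced string has at most one digit equal to $a$ — routine, and your parenthetical identifies the right reason, but it is the one part currently asserted rather than proved. A second, harmless difference: your reduction lemma discards the $a\,t_0$ terms created by borrows at index $1$, so it only gives the inequality $\sum_{i\ge1}m_i t_i\ge\widehat{x}_n$, while the paper's Lemma \ref{l_pre_5_b} preserves the value exactly and deals with a nonzero $t_0$-coefficient as a separate case inside the proof of assertion (1); both treatments of the index-$0$ term are sound.
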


As a by-product of Theorem \ref{gen} we get the following membership criterion: if $n \in \N$ and $n \equiv x_l \pmod{c}$ for some $l \in [0,c-1]$, then $n \in G_{a,b} (c)$ if and only if $n \geq x_l$.

\begin{example}\label{exm_1}
In this example we study the semigroup $G_{3,1} (3)$. 

Adopting the notations introduced above we have that
\begin{equation*}
a = 3, \quad b = 1, \quad c = 3. 
\end{equation*}
Moreover,
\begin{align*}
x_0 & =  0,\\
x_1 & =  1 \cdot t_1 (3,1,3) =  1 \cdot (3 \cdot 3+1) = 10,\\
x_2 & =  2 \cdot t_1 (3,1,3) = 20.
\end{align*}
Therefore,
\begin{align*}
F(G_{3,1} (3)) & =  17,\\
g(G_{3,1} (3)) & =  9,
\end{align*}
according to Theorem \ref{gen}.

Since
\begin{equation*}
\min \{ k \in \N : s_k(3) > 2 \} = 2,
\end{equation*}
we  have that
\begin{equation*}
G_{3,1} (3) = \langle S_1 (3,1,3) \rangle
\end{equation*}
according to Theorem \ref{t_m_gen}.

The non-negative integers smaller than $21$ belonging to $G_{3,1} (3)$ are listed in the following table (the numbers in bold are the elements of $S_{1} (3,1,3)$):

\begin{center}
\begin{tabular}{|c|c|c|c|c|c|c|c|}
\hline 
0 & \textbf{3} & 6 & 9 & 12 & 15 & 18 \\ 
\hline 
 &  &  & \textbf{10} & 13 & 16 & 19  \\ 
\hline 
 &  &  &  &  &  & 20 \\ 
\hline 
\end{tabular} 
\end{center}

\end{example}

\begin{example}\label{exm_2}
In this example we study the semigroup $G_{3,1} (5)$. 

We have that
\begin{equation*}
a = 3, \quad b = 1,  \quad c = 5.
\end{equation*}

Moreover, 
\begin{align*}
x_0 & =  0,\\
x_1 & =  1 \cdot t_1 (3,1,5) = 1 \cdot (3 \cdot 5 + 1) = 16,\\
x_2 & =  2 \cdot t_1 (3,1,5) = 32,\\
x_3 & =  3 \cdot t_1 (3,1,5) = 48,\\
x_4 & =  1 \cdot t_2 (3,1,5) = 3^2 \cdot 5+4 = 49.
\end{align*}
Therefore, 
\begin{align*}
F(G_{3,1} (5)) &  =  44,\\
g(G_{3,1} (5)) & =  27,
\end{align*}
according to Theorem \ref{gen}.

Since
\begin{equation*}
\min \{ k \in \N : s_k(3) > 4 \} = 3,
\end{equation*}
we  have that
\begin{equation*}
G_{3,1} (5) = \langle S_2 (3,1,5) \rangle
\end{equation*}
according to Theorem \ref{t_m_gen}.

The non-negative integers smaller than $50$ belonging to $G_{3,1} (5)$ are listed in the following table (the numbers in bold are the elements of $S_{2} (3,1,5)$):

\begin{center}
\begin{tabular}{|c|c|c|c|c|c|c|c|c|c|}
\hline 
0 & \textbf{5} & 10 & 15 & 20 & 25 & 30 & 35 & 40 & 45 \\ 
\hline 
 &  &  & \textbf{16} & 21 & 26 & 31 & 36 & 41 & 46 \\ 
\hline 
 &  &  &  &  &  & 32 & 37 & 42 & 47 \\ 
\hline 
 &  &  &  &  &  &  &  &  & 48 \\ 
\hline 
 &  &  &  &  &  &  &  &  & \textbf{49} \\ 
\hline 
\end{tabular} 
\end{center}
\end{example}

\begin{example}\label{exm_3}
In this example we study the semigroup $G_{2,3} (4)$. 

We have that
\begin{equation*}
a = 2, \quad b = 3,  \quad c = 4. 
\end{equation*}
Moreover,
\begin{align*}
x_0 & =  0,\\
x_1 & =  1 \cdot t_1 (2,3,4) = 1 \cdot (2 \cdot 4 + 3) = 11,\\
x_2 & =  2 \cdot t_1 (2,3,4) = 22,\\ 
x_3 & =  1 \cdot t_2 (2,3,4) = 1 \cdot (4 \cdot 4 + 3 \cdot 3) = 25.
\end{align*}
Therefore, 
\begin{align*}
F(G_{2,3} (4)) & = 21,\\
g(G_{2,3} (4)) & = 13,
\end{align*}
according to Theorem \ref{gen}.

Since
\begin{equation*}
\min \{ k \in \N : s_k(2) > 3 \} = 3,
\end{equation*}
we  have that
\begin{equation*}
G_{2,3} (4) = \langle S_2 (2,3,4) \rangle
\end{equation*}
according to Theorem \ref{t_m_gen}.

The non-negative integers smaller than $28$ belonging to $G_{2,3} (4)$ are listed in the following table (the numbers in bold are the elements of $S_{2} (2,3,4)$):

\begin{center}
\begin{tabular}{|c|c|c|c|c|c|c|}
\hline 
0 & \textbf{4} & 8 & 12 & 16 & 20 & 24 \\ 
\hline 
 &  &  &  & &  & \textbf{25} \\ 
\hline 
 &  &  &  &  & 22 & 26 \\ 
\hline 
 &  & \textbf{11} & 15 & 19 & 23  & 27 \\  
\hline 
\end{tabular} 
\end{center}
\end{example}

\section{Background} \label{background}
In this section we prove some technical lemmas which we will repeatedly use in Section \ref{proofs}. 

\begin{lemma}\label{l_pre_0}
We have that $S(a,b,c) \subseteq G_{a,b} (c)$.
\end{lemma}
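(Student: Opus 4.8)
The plan is to show that every generator $t_k(a,b,c)$ of $H(a,b,c)$ lies in $G_{a,b}(c)$, and then invoke the fact that $G_{a,b}(c)$ is closed under addition. Since $G_{a,b}(c)$ is by definition the smallest $\theta_{a,b}$-semigroup containing $c$, it suffices to produce each $t_k(a,b,c)$ inside any $\theta_{a,b}$-semigroup $G$ with $c \in G$. I would proceed by induction on $k$.

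For the base case $k=0$, note that $t_0(a,b,c) = a^0 c + b \cdot s_0(a) = c + 0 = c$, which lies in $G$ by hypothesis. For the inductive step, suppose $t_k(a,b,c) \in G$ for some $k \geq 0$; since $c \geq 2$ we have $t_k(a,b,c) \neq 0$, so by Definition \ref{tab_def} the element $\theta_{a,b}(t_k(a,b,c)) = a \cdot t_k(a,b,c) + b$ belongs to $G$. It then remains to verify the arithmetic identity
\begin{equation*}
a \cdot t_k(a,b,c) + b = a(a^k c + b \cdot s_k(a)) + b = a^{k+1} c + b(a \cdot s_k(a) + 1) = a^{k+1} c + b \cdot s_{k+1}(a) = t_{k+1}(a,b,c),
\end{equation*}
using that $a \cdot s_k(a) + 1 = a \sum_{i=0}^{k-1} a^i + 1 = \sum_{i=1}^{k} a^i + 1 = \sum_{i=0}^{k} a^i = s_{k+1}(a)$ (and this also holds when $k=0$, where $a \cdot 0 + 1 = 1 = s_1(a)$). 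Hence $t_{k+1}(a,b,c) \in G$, completing the induction, so $S(a,b,c) \subseteq G$ for every $\theta_{a,b}$-semigroup $G$ containing $c$; in particular $S(a,b,c) \subseteq G_{a,b}(c)$.

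There is essentially no hard part here: the statement is a warm-up lemma, and the only thing to be careful about is the bookkeeping in the identity $a \cdot s_k(a) + 1 = s_{k+1}(a)$ and the handling of the $k=0$ edge case (where $s_0(a)=0$ by the piecewise definition). One subtlety worth a sentence is that applying $\theta_{a,b}$ requires the argument to be nonzero; this is guaranteed because $c \geq 2$ forces $t_k(a,b,c) = a^k c + b \cdot s_k(a) \geq c \geq 2 > 0$ for all $k$. I would not even need to invoke Theorem \ref{t_g_is_h} (which would be circular, since that theorem presumably relies on this lemma); the argument is self-contained from the definitions.
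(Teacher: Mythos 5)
Your proof is correct and follows essentially the same argument as the paper: induction on $k$, with $t_0(a,b,c)=c$ as the base case and $t_{k+1}(a,b,c)=\theta_{a,b}(t_k(a,b,c))$ for the inductive step. The extra details you supply (the identity $a\cdot s_k(a)+1=s_{k+1}(a)$ and the observation that $t_k(a,b,c)\neq 0$ so $\theta_{a,b}$ may be applied) are implicit in the paper's shorter proof.
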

\begin{proof}
We prove by induction on $k \in \N$ that any $t_k (a,b,c)$ belongs to $G_{a,b}(c)$.

If $k = 0$, then $t_0(a,b,c) = c \in G_{a,b} (c)$.

Suppose now that $t_k (a,b,c) \in G_{a,b} (c)$ for some non-negative integer $k$. Then
\begin{equation*}
t_{k+1} (a,b,c) = a \cdot t_k(a,b,c) + b = \theta_{a,b} (t_k(a,b,c)) \in G_{a,b} (c). \qedhere
\end{equation*}
\end{proof}

\begin{lemma}\label{l_pre_1}
$H(a,b,c)$ is a subsemigroup of $(\N, +)$ closed with respect to the action of the map $\theta_{a,b}$.
\end{lemma}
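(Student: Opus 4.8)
The plan is to verify the two assertions separately: first that $H(a,b,c)$ is a subsemigroup of $(\N,+)$, and then that it is closed under $\theta_{a,b}$. The first claim is essentially immediate: $H(a,b,c)$ is defined as $\langle S(a,b,c)\rangle$, the semigroup generated by a subset of $\N$, hence by construction it is closed under addition and contains $0$. (If one wishes, one notes $0\in H(a,b,c)$ as the empty sum, and that $S(a,b,c)\subseteq \N$ so all elements are non-negative.) So the substance of the lemma is the closure under $\theta_{a,b}$.

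For the closure, I would take an arbitrary nonzero $y\in H(a,b,c)$ and write it as a finite sum of generators, $y=\sum_{k} m_k\, t_k(a,b,c)$ with $m_k\in\N$ and not all $m_k$ zero. The key computation is then
\begin{equation*}
\theta_{a,b}(y) = a y + b = \sum_{k} m_k\,\bigl(a\, t_k(a,b,c)\bigr) + b.
\end{equation*}
The idea is to absorb the additive constant $b$ into the term coming from one of the generators, using the recursion $a\, t_k(a,b,c) + b = t_{k+1}(a,b,c)$, which follows directly from the definitions $t_k(a,b,c)=a^k c + b\, s_k(a)$ and $s_{k+1}(a)=a\, s_k(a)+1$. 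Concretely, pick some index $j$ with $m_j\ge 1$; then
\begin{equation*}
\theta_{a,b}(y) = \bigl(m_j-1\bigr)\bigl(a\,t_j(a,b,c)\bigr) + t_{j+1}(a,b,c) + \sum_{k\neq j} m_k\,\bigl(a\,t_k(a,b,c)\bigr).
\end{equation*}
Now every $a\, t_k(a,b,c)$ appearing here must itself be shown to lie in $H(a,b,c)$; since $a\,t_k(a,b,c) = t_{k+1}(a,b,c) - b$ is not directly a generator, instead I would observe $a\,t_k(a,b,c) = (a-1)\,t_k(a,b,c) + t_k(a,b,c)$ does not help either, so the cleaner route is to note $a\, t_k(a,b,c) + b = t_{k+1}(a,b,c)\in S(a,b,c)$ and handle the whole sum at once: distribute the single "$+b$" onto exactly one generator and leave the remaining $a\,t_k$ terms to be re-expanded, which forces an induction.

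Thus the honest approach is induction on $\sum_k m_k$, the total number of generator-summands used to express $y$. In the base case $y=t_j(a,b,c)$ for a single $j$, and $\theta_{a,b}(y)=t_{j+1}(a,b,c)\in S(a,b,c)\subseteq H(a,b,c)$. For the inductive step, write $y = t_j(a,b,c) + y'$ where $y'\in H(a,b,c)$ uses one fewer summand (and either $y'=0$ or $y'\neq 0$). If $y'=0$ we are in the base case; otherwise $\theta_{a,b}(y) = a\,t_j(a,b,c) + a y' + b = \bigl(t_{j+1}(a,b,c) - b\bigr) + \bigl(\theta_{a,b}(y')\bigr)$... again awkward. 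The genuinely clean formulation, which is what I would write up, is: $\theta_{a,b}(y) = a\,t_j(a,b,c) + \theta_{a,b}(y')$ when $y'\neq 0$, since $ay+b = a t_j + (ay'+b)$; here $\theta_{a,b}(y')\in H(a,b,c)$ by the induction hypothesis, and $a\,t_j(a,b,c)\in H(a,b,c)$ because $H$ is a semigroup and $t_j(a,b,c)$ is a generator (so $a$ copies of it sum to an element of $H$). This closes the induction. The main obstacle is purely bookkeeping: making sure the constant $b$ is attached to exactly one generator and never double-counted, and handling the degenerate case where the "remainder" $y'$ is zero; there is no real mathematical difficulty, only the need to state the induction on the number of summands cleanly so that the recursion $a\,t_k + b = t_{k+1}$ is invoked exactly once.
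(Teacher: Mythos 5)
Your proof is correct and rests on the same key identity the paper uses, namely $a\,t_k(a,b,c)+b=t_{k+1}(a,b,c)$, applied to exactly one chosen summand. The only difference is packaging: the paper needs no induction, since after absorbing $b$ into one term the expression $\sum_{k\neq \tilde{k}} a j_k\, t_k(a,b,c) + a(j_{\tilde{k}}-1)\, t_{\tilde{k}}(a,b,c) + t_{\tilde{k}+1}(a,b,c)$ is already an $\N$-linear combination of elements of $S(a,b,c)$, hence in $H(a,b,c)$ by definition; your mid-proof worry that each $a\,t_k(a,b,c)$ must separately be shown to lie in $H(a,b,c)$ was a false obstacle (it is $a$ copies of a generator), and once you resolved it your induction on the number of summands goes through, just with more bookkeeping than necessary.
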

\begin{proof}
By definition, $H(a,b,c)$ is a subsemigroup of $(\N, +)$.

We prove that $H(a,b,c)$ is closed with respect to the action of the map \tab. 

Consider an element 
\begin{equation*}
y = \sum_{k \in K} j_k \cdot t_k(a,b,c) \in H(a,b,c),
\end{equation*} 
where $K$ is a non-empty finite subset of $\N$ and $\{ j_k \}_{k \in K}$ is a set of positive integers. Let $\tilde{k}$ be a chosen element of $K$. Then

\begin{align*}
a y + b & =  \sum_{k \in K} a j_k \cdot t_k(a,b,c) + b \\
& =  \sum_{\substack{k \in K \\ k \not = \tilde{k}} } a j_k \cdot t_k(a,b,c) + a (j_{\tilde{k}}-1) \cdot t_{\tilde{k}}(a,b,c) + a \cdot t_{\tilde{k}} (a, b, c) + b \\
& =  \sum_{\substack{k \in K \\ k \not = \tilde{k}} } a j_k \cdot t_k(a,b,c) + a (j_{\tilde{k}}-1) \cdot t_{\tilde{k}}(a,b,c) + t_{\tilde{k}+1} (a, b, c).
\end{align*}
Since this latter is a linear combination of elements in $S(a,b,c)$ with coefficients in $\N$, we conclude that $ay+b \in H(a,b,c)$.
\end{proof}

\begin{lemma}\label{l_pre_2}
For any $k \in \N$ we have that the set
\begin{equation*}
I_k (a,b,c) := \{a^kc+bi: i \in [0, s_k(a)] \}
\end{equation*}
is contained in $H(a,b,c)$.
\end{lemma}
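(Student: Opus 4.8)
The plan is to prove Lemma \ref{l_pre_2} by induction on $k$, showing that every element of the interval $I_k(a,b,c) = \{a^k c + bi : i \in [0, s_k(a)]\}$ lies in $H(a,b,c)$.

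**Base case.** When $k = 0$ we have $s_0(a) = 0$, so $I_0(a,b,c) = \{c\}$, and $c = t_0(a,b,c) \in S(a,b,c) \subseteq H(a,b,c)$ trivially.

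**Inductive step.** Suppose $I_k(a,b,c) \subseteq H(a,b,c)$ for some $k \in \N$; I want to show $I_{k+1}(a,b,c) \subseteq H(a,b,c)$. An arbitrary element of $I_{k+1}$ has the form $a^{k+1}c + bi$ with $i \in [0, s_{k+1}(a)]$. The key identity is $s_{k+1}(a) = 1 + a \cdot s_k(a)$ (immediate from the definition of $s_k$). So write $i$ via the division algorithm as $i = a q + r$ with $q \in [0, s_k(a)]$ and $r \in [0, a-1]$ — but one must be slightly careful at the top end: when $i = s_{k+1}(a) = 1 + a s_k(a)$ the quotient reaches $q = s_k(a)$ only with $r = 1$, which is fine since $1 \le a-1$ unless $a = 1$ (the case $a=1$ can be handled separately, since then $s_k(1) = k$ and $t_k(1,b,c) = c + bk$, making $H$ easy to describe directly). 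Then
\begin{align*}
a^{k+1}c + bi &= a(a^k c + b q) + b r \\
&= a(a^k c + b q) + r \cdot b.
\end{align*}
By the inductive hypothesis $a^k c + bq \in H(a,b,c)$ since $q \in [0, s_k(a)]$, so by Lemma \ref{l_pre_1} its image $\theta_{a,b}(a^k c + bq) = a(a^k c + bq) + b = a^{k+1}c + abq + b \in H(a,b,c)$. This handles the case $r = 1$. For general $r$, I instead add $b$ repeatedly — but $b$ itself need not lie in $H$, so the cleaner route is: the element $a^k c + b q$ is in $H$, and $\theta_{a,b}$ of it is $a^{k+1} c + b(aq+1) = a^{k+1} c + b \cdot i$ precisely when $r = 1$. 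To cover all residues I should instead split as $i = a q + r$ with $r \in [1, a]$ when $i \ge 1$ (taking $q = \lfloor (i-1)/a \rfloor$), so that $q \in [0, s_k(a)]$ is guaranteed by $i \le 1 + a s_k(a)$, and then
\begin{equation*}
a^{k+1} c + b i = a(a^k c + bq) + b + (r-1) b = \theta_{a,b}(a^k c + bq) + (r-1)b.
\end{equation*}
Here $(r-1) b$ with $r - 1 \in [0, a-1]$ is still an additive shift by a multiple of $b$ that is not obviously in $H$; so the truly clean argument uses that $a^k c + bq \in H$ together with $c = a^0 c \in H$? No — the right approach is a \emph{nested} induction or, better, to observe that $\{a(a^kc+bj) + b : j \in [0,s_k(a)]\} = \{a^{k+1}c + b(aj+1): j \in [0,s_k(a)]\}$ gives every element of $I_{k+1}$ whose offset $i$ is $\equiv 1 \pmod a$, and then close the gaps by noting $a^{k+1} c + b i$ and $a^{k+1} c + b(i-1)$ can be related only through adding $b$; hence the honest proof proceeds by a secondary induction on $i$ within each level, using $a^{k+1} c + b i = (a^{k+1} c + b(i - a q)) + $ (a sum of $q$ copies of distinct generators), i.e. directly writing $a^{k+1} c + bi$ as $\theta_{a,b}$ of $a^k c + bq$ plus a combination built from the $I_k$ elements.

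**Main obstacle.** The delicate point is the carry at the interval endpoints and the fact that shifting by a bare multiple of $b$ is not a priori legal inside $H$; the proof must arrange the decomposition $i = a q + r$ so that the residue part is absorbed by $\theta_{a,b}$ (which contributes exactly one $+b$) rather than left over. I expect the cleanest formulation to decompose each $i \in [1, s_{k+1}(a)]$ as $i = a q + r$ with $q \in [0, s_k(a)]$, $r \in [1, a]$, and then observe that $a^{k+1} c + b i$ equals $r$ copies of elements of the form $\theta_{a,b}(\text{element of } I_k)$ plus suitable leftover generators — or, most economically, to prove by induction on $r$ that if $a^k c + b q \in H$ then $a(a^k c + bq) + rb \in H$ for all $r \in [1, a]$, the step being $a(a^k c + b q) + (r+1) b = \bigl(a(a^k c + bq) + rb\bigr) + b$ combined with the fact (from Lemma \ref{l_pre_1}'s proof) that $\theta_{a,b}$ applied to a carefully chosen representative advances by $b$. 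Handling $a = 1$ as a separate trivial case removes the only real edge case.
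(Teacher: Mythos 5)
Your proposal does not close the gap it correctly identifies, so as written it is not a proof. You see the obstacle clearly: applying $\theta_{a,b}$ to an element $a^kc+bq$ of $I_k$ only produces $a^{k+1}c+b(aq+1)$, i.e.\ offsets congruent to $1$ modulo $a$, and the remaining offsets cannot be reached by ``adding $b$'' because $b$ itself need not lie in $H(a,b,c)$. But every route you then sketch to fix this (a secondary induction on $i$, an induction on $r$ with step ``add $b$ to the previous element,'' or ``$\theta_{a,b}$ applied to a carefully chosen representative advances by $b$'') either relies on exactly the illegal operation of shifting by a bare multiple of $b$, or is left unspecified (``$r$ copies of elements of the form $\theta_{a,b}(\cdot)$ plus suitable leftover generators'' is never written down). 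The proposal ends with what you \emph{expect} the clean formulation to be, not with an argument, so the inductive step is missing.

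The missing idea, which is how the paper proceeds, is to write each new element as a sum of exactly $a$ elements of $I_k$ rather than as $\theta_{a,b}$ of one element plus a correction: for $r\in[0,s_k(a)[$ and $j\in[0,a]$ one has
\begin{equation*}
a^{k+1}c+b(ar+j)=(a-j)\cdot\bigl(a^kc+br\bigr)+j\cdot\bigl(a^kc+b(r+1)\bigr),
\end{equation*}
and both summands lie in $I_k\subseteq H(a,b,c)$ by the inductive hypothesis. Varying $r$ and $j$ covers every offset $i\in[0,a\cdot s_k(a)]$, and the single remaining element of $I_{k+1}$, namely $t_{k+1}(a,b,c)=a^{k+1}c+b\,s_{k+1}(a)$, is in $H(a,b,c)$ because it belongs to $S(a,b,c)$ (equivalently, it is $\theta_{a,b}(t_k(a,b,c))$). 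Note that this identity is precisely the ``swap one copy of $a^kc+br$ for $a^kc+b(r+1)$ to gain $b$'' mechanism you were groping for: the increments of $b$ are realized by exchanging neighboring elements of $I_k$ inside a fixed sum of $a$ terms, never by adding $b$ on its own. With that identity in place your induction on $k$ goes through, and the separate treatment of $a=1$ becomes unnecessary.
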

\begin{proof}
We prove the claim by induction on $k$.

Proving the base step is trivial, since $c \in H(a,b,c)$ and
\begin{equation*}
I_0(a,b,c) = \{ c \}.
\end{equation*}

Suppose now that $I_k(a,b,c) \subseteq H(a,b,c)$ for some $k \in \N$. 

For any $r \in [0, s_k(a)[$ and any $j \in [0,a]$ we have that 
\begin{equation*}
a^{k+1} c + b (a r + j) \in H(a,b,c).
\end{equation*}
In fact,
\begin{equation*}
a^{k+1} c + b (a r + j) = (a-j) \cdot (a^k c+br ) +j \cdot (a^k c+b(r+1)),
\end{equation*}
where
\begin{align*}
\{a^k c + br, a^k c+b(r+1) \} \subseteq  I_k(a,b,c).
\end{align*}

Therefore,
\begin{equation*}
\{a^{k+1} c + b i : i \in [0,a \cdot s_k(a)] \} \subseteq H(a,b,c).
\end{equation*}

Finally, 
\begin{align*}
t_{k+1} (a, b, c) & =  \theta_{a,b} (t_k(a,b,c)) \in  H(a,b,c).
\end{align*}
Hence, $I_{k+1} (a,b,c) \subseteq H(a,b,c)$ and the inductive step is proved.
\end{proof}

\begin{lemma}\label{l_pre_3}
Let $k$ be a non-negative integer such that $s_k(a) \geq c-1$. Then
\begin{equation*}
[t_k (a,b,c), + \infty[ \subseteq H(a,b,c).
\end{equation*}
\end{lemma}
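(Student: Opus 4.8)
The plan is to show that once $s_k(a) \geq c-1$, the set $I_k(a,b,c)$ furnished by Lemma \ref{l_pre_2} is wide enough — it consists of $s_k(a)+1 \geq c$ consecutive terms of an arithmetic progression with common difference $b$, where $\gcd(b,c)=1$ — that adding copies of $c$ (which lies in $H(a,b,c)$) fills in every residue class modulo $c$ above $t_k(a,b,c)$. First I would record that $I_k(a,b,c) = \{a^k c + bi : i \in [0,s_k(a)]\} \subseteq H(a,b,c)$ by Lemma \ref{l_pre_2}, and that its largest element is $a^k c + b\, s_k(a) = t_k(a,b,c)$ while its smallest is $a^k c = t_k(a,b,c) - b\, s_k(a)$. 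Since $\gcd(b,c)=1$, the integers $bi$ for $i \in [0,c-1]$ hit all $c$ residue classes modulo $c$; because $s_k(a) \geq c-1$, the sub-block $\{a^k c + bi : i \in [0,c-1]\}$ of $I_k(a,b,c)$ already contains one representative of every residue class mod $c$.

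Next I would take an arbitrary integer $x \geq t_k(a,b,c)$ and argue it lies in $H(a,b,c)$. Write $x \equiv a^k c + b i_0 \pmod c$ for the unique $i_0 \in [0,c-1]$ with $a^k c + b i_0 \in I_k(a,b,c)$. Then $x - (a^k c + b i_0)$ is a non-negative multiple of $c$: it is divisible by $c$ by the choice of $i_0$, and it is non-negative because $a^k c + b i_0 \leq a^k c + b(c-1) \leq a^k c + b\, s_k(a) = t_k(a,b,c) \leq x$. Hence $x = (a^k c + b i_0) + m c$ for some $m \in \N$, and since $a^k c + b i_0 \in H(a,b,c)$, $c = t_0(a,b,c) \in H(a,b,c)$, and $H(a,b,c)$ is a subsemigroup of $(\N,+)$, we get $x \in H(a,b,c)$. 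As $x$ was arbitrary in $[t_k(a,b,c), +\infty[$, this proves the inclusion.

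I expect the only real subtlety to be the inequality $a^k c + b i_0 \leq t_k(a,b,c)$, which is where the hypothesis $s_k(a) \geq c-1$ is used — it guarantees that the residue-class representative we pick from $I_k(a,b,c)$ never exceeds $t_k(a,b,c)$, so that the "remaining distance" to $x$ is a non-negative multiple of $c$ rather than a negative one. Everything else is a routine combination of Lemma \ref{l_pre_2}, the coprimality of $b$ and $c$, and closure under addition.
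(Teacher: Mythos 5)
Your proposal is correct and follows essentially the same route as the paper: both pick, via $\gcd(b,c)=1$, a representative $a^k c + bi$ with $i \in [0,c-1] \subseteq [0,s_k(a)]$ of the residue class of $x$ inside $I_k(a,b,c)$ from Lemma \ref{l_pre_2}, and then add a non-negative multiple of $c$. Your explicit justification that this multiple is non-negative (using $a^k c + bi \leq t_k(a,b,c) \leq x$) is the same use of the hypothesis $s_k(a) \geq c-1$, spelled out slightly more carefully than in the paper.
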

\begin{proof}
Let $y \in [t_k(a,b,c), + \infty[$. Then
\begin{equation*}
y \equiv r \pmod{c}
\end{equation*}
for some $r \in [0,c-1]$.

Since $\gcd(b,c) = 1$, there exists an integer $i \in [0,c-1] \subseteq [0, s_k(a)]$ such that
\begin{equation*}
a^k c + bi \equiv r \pmod{c}.
\end{equation*}
Therefore,
\begin{equation*}
y-a^k c - bi \equiv 0 \pmod{c},
\end{equation*}
namely
\begin{equation*}
y = a^k c + bi + cq
\end{equation*}
for some non-negative integer $q$. Since 
\begin{equation*}
\{ a^k c + bi, c \} \subseteq H(a,b,c)
\end{equation*}
according to Lemma \ref{l_pre_2}, we conclude that $y \in H(a,b,c)$ and the result follows.
\end{proof}

\begin{lemma}\label{l_pre_5}
If $k$ is a positive integer such that $s_k(a) \leq c-1$, then 
\begin{equation*}
t_k(a,b,c) \not \in \langle S_{k-1} (a,b,c) \rangle. 
\end{equation*}
\end{lemma}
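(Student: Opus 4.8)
The plan is to argue by contradiction. Suppose $t_k(a,b,c) = \sum_{j=0}^{k-1}\lambda_j\, t_j(a,b,c)$ for some $\lambda_0,\dots,\lambda_{k-1} \in \N$, and set $A := \sum_{j=0}^{k-1}\lambda_j a^j$, $B := \sum_{j=0}^{k-1}\lambda_j s_j(a)$ and $N := \sum_{j=0}^{k-1}\lambda_j$. The idea is to squeeze this hypothetical representation with two kinds of information: a congruence modulo $c$, and the telescoping identity $(a-1)\,s_j(a) = a^j - 1$ (valid for every $a \in \N^*$ and $j \in \N$), which summed against the $\lambda_j$ gives $(a-1)B = A - N$.

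First I would expand the assumed equality through $t_j(a,b,c) = a^j c + b\,s_j(a)$, getting $a^k c + b\,s_k(a) = cA + bB$, and reduce it modulo $c$. Since $\gcd(b,c) = 1$ this yields $s_k(a) \equiv B \pmod c$; this is the single point where the hypothesis $s_k(a) \le c-1$ is used, since combined with $B \ge 0$ it forces $B = s_k(a) + mc$ for some integer $m \ge 0$. Substituting back into $a^k c + b\,s_k(a) = cA + bB$ gives $a^k = A + bm$. Then I would eliminate $A$ between $A = a^k - bm$ and the consequence $A = (a-1)B + N = (a^k - 1) + (a-1)mc + N$ of the identities above, arriving at the scalar relation $1 - N = m\bigl((a-1)c + b\bigr)$.

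The right-hand side is non-negative ($m \ge 0$ and $(a-1)c + b \ge b \ge 1$), while $N \ge 1$ (a representation with $N = 0$ would make $t_k(a,b,c) = 0$), so the left-hand side is non-positive; hence both sides vanish, forcing $m = 0$ and $N = 1$. But $N = 1$ means $t_k(a,b,c) = t_j(a,b,c)$ for a single $j \in [0,k-1]$, which is impossible because $t_{i+1}(a,b,c) = a\,t_i(a,b,c) + b > t_i(a,b,c)$ makes the sequence strictly increasing. The only delicate step is deducing $m \ge 0$ from the congruence; the remainder is bookkeeping with $s_j(a)$, so I do not expect a real obstacle here.
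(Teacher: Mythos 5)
Your argument is correct, and it takes a genuinely different route from the paper. The paper proves the lemma by contradiction through a case analysis on the size of $\sum_i j_i a^i$ relative to $a^k$, with further subcases ($a=1$; $a>1$ with zero, one, or at least two generators of positive index), mixing explicit upper-bound estimates on $\sum_i j_i t_i(a,b,c)$ with congruences modulo $c$; the hypothesis $s_k(a)\le c-1$ enters there to rule out the congruence $b\,s_k(a)\equiv b\,B \pmod c$ having a spurious solution. You instead compress everything into one exact scalar identity: writing the hypothetical relation as $a^kc+b\,s_k(a)=cA+bB$, reducing modulo $c$ and using $\gcd(b,c)=1$ together with $0\le s_k(a)\le c-1$ and $B\ge 0$ to get $B=s_k(a)+mc$ with $m\ge 0$ (the only place the hypothesis is needed), and then eliminating $A$ via the telescoping identity $(a-1)s_j(a)=a^j-1$ to arrive at $1-N=m\bigl((a-1)c+b\bigr)$, which is killed by sign considerations; the residual case $N=1$, $m=0$ is excluded by strict monotonicity of $k\mapsto t_k(a,b,c)$. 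What your approach buys is uniformity (no separate treatment of $a=1$, no counting of how many positive-index generators occur) and the replacement of inequality estimates by a single exact equation, at the modest cost of introducing the bookkeeping quantities $A$, $B$, $N$; the paper's proof is longer but each case is elementary and self-contained. Both proofs ultimately rest on the same two ingredients, the congruence modulo $c$ with $\gcd(b,c)=1$ and the bound $s_k(a)\le c-1$, so your version can be seen as a streamlined synthesis of the paper's Cases 1--3.
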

\begin{proof}
Suppose by contradiction that
\begin{equation*}
\sum_{i \in K} j_i \cdot t_i(a,b,c) = t_k(a,b,c)
\end{equation*}
for some positive integers $\{ j_i \}_{i \in K}$ indexed on a non-empty set $K \subseteq [0, k-1]$. Before proceeding we define $K^*:=K \backslash \{ 0 \}$.

We distinguish three different cases.
\begin{itemize}
\item \emph{Case 1:} $\sum_{i \in K} j_i \cdot a^i \leq a^k$. We distinguish four subcases.
\begin{itemize}
\item \emph{Subcase 1:} $a = 1$. Then
\begin{equation*}
\sum_{i \in K} j_i 1^i \leq 1^k.
\end{equation*}
This latter is possible only if $|K| = 1$ and the only integer $j_i$ is equal to $1$. Therefore, $K = \{ r \}$ for some $r \in [0,k-1]$ and $j_r = 1$. Hence,
\begin{equation*}
\sum_{i \in K} j_i \cdot t_i (a,b,c) = c + b \cdot s_r(1),
\end{equation*}
while
\begin{equation*}
t_k (a,b,c) = c + b \cdot s_k (1).
\end{equation*}
Since $s_r(1) < s_k(1)$, we conclude that $\sum_{i \in K}  j_i \cdot t_i (a,b,c) \not = t_k(a,b,c)$.

\item \emph{Subcase 2:} $a > 1$ and $\sum_{i \in K^*} j_i =0$. Then $K = \{ 0 \}$ and 
\begin{equation*}
\sum_{i \in K} j_i \cdot t_i (a,b,c) = j_0 \cdot c \equiv 0 \pmod{c}.
\end{equation*}
Since 
\begin{equation*}
t_k(a,b,c) \equiv b \cdot s_k(a) \pmod{c}
\end{equation*}
and
\begin{equation*}
b \cdot s_k(a) \not \equiv 0 \pmod{c}
\end{equation*}
because $\gcd(b,c) = 1$ and $1 \leq s_k(a) \leq c-1$, we conclude that  $\sum_{i \in K}  j_i \cdot t_i (a,b,c) \not = t_k(a,b,c)$.

\item \emph{Subcase 3:} $a > 1$ and $\sum_{i \in K^*} j_i =1$. Then either $K = \{ r \}$ or $K= \{ 0, r \}$ for some positive integer $r$. In both cases, $j_r = 1$.

If $K = \{ r \}$, then 
\begin{equation*}
\sum_{i \in K} j_i \cdot t_i (a,b,c) = a^r c + b \cdot s_r(a)  < t_{k} (a,b,c).
\end{equation*}

If $K= \{ 0, r \}$, then
\begin{equation*}
\sum_{i \in K} j_i \cdot t_i (a,b,c) = j_0 \cdot c + a^r c + b \cdot s_r(a).
\end{equation*}

We notice that
\begin{displaymath}
\begin{array}{rcll}
t_k (a,b,c) & \equiv & b \cdot s_k (a)  & \pmod{c},\\
\sum_{i \in K} j_i \cdot t_i (a,b,c) & \equiv & b \cdot s_r (a)&  \pmod{c}.
\end{array}
\end{displaymath}
Since $\gcd (b,c) = 1$ and
\begin{equation*}
0 < s_k (a) - s_r(a) < c-1,
\end{equation*}
we conclude that
\begin{equation*}
t_k (a,b,c) \not \equiv \sum_{i \in K} j_i \cdot t_i (a,b,c) \pmod{c},
\end{equation*}
and consequently $t_k(a,b,c) \not = \sum_{i \in K} j_i \cdot t_i (a,b,c)$.

\item \emph{Subcase 4:} $a > 1$ and $\sum_{i \in K^*} j_i \geq 2$. Then
\begin{align*}
\sum_{i \in K} j_i \cdot t_i(a,b,c) & =  \sum_{i \in K} j_i \cdot a^i c + b \cdot \sum_{i \in K^*} j_i \cdot  s_i(a) \\
& \leq  a^k \cdot c + b \cdot  \left( \sum_{i \in K^*} j_i \cdot \frac{a^i-1}{a-1} \right) \\
& =  a^k \cdot c + \frac{b}{a-1} \cdot \left( \sum_{i \in K^*} j_i \cdot a^i - \sum_{i \in K^*} j_i \right) \\
& \leq  a^k \cdot c + \frac{b}{a-1} \cdot \left( a^k - \sum_{i \in K^*} j_i \right) \\
& \leq  a^k \cdot c + \frac{b}{a-1} \cdot \left( a^k - 2 \right)\\
& <  a^k \cdot c + b \cdot \frac{a^k-1}{a-1} = t_k (a,b,c).
\end{align*}
\end{itemize}

\item \emph{Case 2:} $\sum_{i \in K} j_i \cdot a^i > a^k$ and $\sum_{i \in K} j_i \cdot s_i(a) > s_k(a)$. Then
\begin{equation*}
\sum_{i \in K} j_i \cdot t_i(a,b,c) > t_k(a,b,c),
\end{equation*}
in contradiction with the initial assumption.

\item \emph{Case 3:} $\sum_{i \in K} j_i \cdot a^i > a^k$ and $\sum_{i \in K} j_i \cdot s_i(a) \leq s_k(a)$. Since
\begin{equation*}
\left(\sum_{i \in K} j_i a^i - a^k \right) \cdot c = b \cdot \left( s_k(a) - \sum_{i \in K} j_i s_i (a) \right)
\end{equation*}
and
\begin{equation*}
\left(\sum_{i \in K} j_i a^i - a^k \right) \cdot c > 0,
\end{equation*}
we get that
\begin{equation*}
0 < s_k(a) - \sum_{i \in K} j_i s_i (a) \leq c-1.
\end{equation*}
This latter fact implies that
\begin{equation*}
b \cdot \left( s_k(a) - \sum_{i \in K} j_i s_i (a) \right) \not \equiv 0 \pmod{c},
\end{equation*}
in contradiction with the fact that
\begin{equation*}
\left(\sum_{i \in K} j_i a^i - a^k \right) \cdot c \equiv 0 \pmod{c}.
\end{equation*}
Hence, also in this case $\sum_{i \in K} j_i \cdot t_i(a,b,c) \not = t_k(a,b,c)$. \qedhere
\end{itemize}
\end{proof}
\begin{lemma}\label{l_pre_5_a}
Let $k$ be a positive integer. 

If $x$ is a positive integer such that
\begin{equation*}
s_k(a) \leq x < s_{k+1}(a),
\end{equation*}
then there exists and is unique an $a$-reduced set of  integers $\{ j_i \}_{i=1}^k$ such that
\begin{equation*}
x = \sum_{i=1}^k j_i \cdot s_i(a).
\end{equation*}
\end{lemma}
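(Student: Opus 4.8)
The plan is to prove existence and uniqueness simultaneously by strong induction on $k$, after first isolating one auxiliary bound. Throughout write $s_i := s_i(a)$ and recall $s_{i+1} = 1 + a s_i$ and $(a-1) s_i = a^i - 1$, so that $\sum_{i=1}^{k} a^i = a s_k = s_{k+1} - 1$; in particular every range $[s_k, s_{k+1}-1]$ has exactly $a^k$ elements, and these ranges for $k \ge 1$ partition $[1, +\infty[$.

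The auxiliary bound I would establish first is: if $(j_i)_{i=1}^{k}$ is any tuple of integers with $j_i \in [0,a]$ satisfying only the ``one-$a$'' clause of the definition (if $j_m = a$ then $j_i = 0$ for all $i < m$), then $\sum_{i=1}^{k} j_i s_i \le s_{k+1} - 1$. The proof splits on whether some entry equals $a$. If all $j_i \le a-1$, then $\sum j_i s_i \le (a-1)\sum_{i=1}^{k} s_i = \sum_{i=1}^{k}(a^i - 1) = s_{k+1} - 1 - k < s_{k+1}-1$. If $j_m = a$ (this $m$ is then unique, with $j_i = 0$ for $i<m$ and $j_i \le a-1$ for $i>m$), the same telescoping gives $\sum j_i s_i \le a s_m + (a-1)\sum_{i=m+1}^{k} s_i = a s_k - (k-m) = s_{k+1} - 1 - (k-m) \le s_{k+1}-1$. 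Note no step divides by $a-1$, so this covers $a = 1$ as well.

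For the induction, the base case $k=1$ is immediate: $[s_1, s_2-1] = [1,a]$ and the only $a$-reduced tuple representing $x$ is $(j_1)$ with $j_1 = x$. For the step, given $x$ with $s_{k+1} \le x < s_{k+2}$, the key point is that the leading coefficient is forced. If $(j_i)_{i=1}^{k+1}$ is $a$-reduced with $\sum j_i s_i = x$, then $(j_1,\dots,j_k)$ satisfies the hypotheses of the auxiliary bound (the one-$a$ clause is inherited when $j_{k+1} < a$, and all lower entries vanish when $j_{k+1} = a$), so $\sum_{i=1}^{k} j_i s_i \in [0, s_{k+1}-1]$; hence $x = j_{k+1} s_{k+1} + (\text{an element of }[0,s_{k+1}-1])$ forces $j_{k+1} = \lfloor x / s_{k+1}\rfloor =: q$, and since $s_{k+1} \le x \le s_{k+2}-1 = a s_{k+1}$ we get $q \in [1,a]$. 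Setting $R := x - q s_{k+1} \in [0, s_{k+1}-1]$, the problem reduces to representing $R$: if $R = 0$ the only admissible completion is $(0,\dots,0,q)$; if $R \ge 1$ then necessarily $q \le a-1$ (else $x \ge s_{k+2}$), and $R$ lies in exactly one interval $[s_m, s_{m+1}-1]$ with $m \in [1,k]$, so the induction hypothesis yields a unique $a$-reduced $(j_1,\dots,j_m)$ with $\sum_{i=1}^{m} j_i s_i = R$; padding with zeros up to index $k+1$ and appending $q$ gives the tuple. One checks it is $a$-reduced: its only possible entry equal to $a$ sits at some index $\le m$, below which all entries are $0$, and $j_{k+1} = q \ne 0$. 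Existence and uniqueness both come out of this single argument, since every choice along the way was forced; if one prefers a separate uniqueness proof, the standard argument works — if two distinct $a$-reduced tuples had equal image, looking at the largest index $M$ where they differ and applying the auxiliary bound to the truncation below $M$ gives $s_M \le (j_M - j'_M)s_M = \sum_{i<M}(j'_i - j_i)s_i \le \sum_{i<M} j'_i s_i \le s_M - 1$, a contradiction.

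The real crux is the auxiliary bound combined with the ``forcing'' observation: once one knows $\sum_{i=1}^{k} j_i s_i < s_{k+1}$ for admissible lower tails, the leading coefficient is pinned down by a floor and the rest is a clean descent onto a shorter interval. The only genuine bookkeeping friction I anticipate is checking that the padded tuple still satisfies the slightly fiddly one-$a$ clause, and confirming that the degenerate case $a=1$ (where each range is a single point and the only $a$-reduced tuple on $[1,k]$ is $(0,\dots,0,1)$) is subsumed rather than requiring separate treatment.
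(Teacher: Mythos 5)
Your proof is correct and follows essentially the same route as the paper's: induction on $k$, with the leading coefficient forced as the quotient of $x$ upon division by the largest relevant value $s_k(a)$ (equivalently your floor $\lfloor x/s_{k+1}(a)\rfloor$ in the shifted indexing) and the remainder handled by the inductive hypothesis on its own interval $[s_m(a), s_{m+1}(a)[$. The only real difference is that you make the uniqueness explicit via your auxiliary bound, which is essentially Lemma \ref{l_pre_8} of the paper, whereas the paper's own proof leaves that part implicit in the uniqueness of the quotient and remainder.
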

\begin{proof}
We prove the claim by induction on $k \in \N^*$.

If $k=1$, then $s_1(a) \leq x < s_2(a) = 1 + a$. Therefore $x = j_1 \cdot s_1(a)$, where $j_1 = x$.

Suppose that $k > 1$ and $s_k (a) \leq x < s_{k+1} (a)$. Then there exist and are unique two non-negative integers $q$ and $r$ such that
\begin{displaymath}
\begin{cases}
x = q  s_k(a) + r \\
0 \leq r < s_k(a)
\end{cases}
\end{displaymath}  
and $q \leq a$. 

If $r =0$, then $q \geq 1$. The result follows setting $j_k := q$ and $j_i := 0$ for any $i < k$.

If $r > 0$, then $1 \leq q < a$ and $s_{\tilde{k}} (a) \leq r < s_{\tilde{k}+1} (a)$ for some positive integer $\tilde{k}$. By inductive hypothesis we have that 
\begin{equation*}
r = \sum_{i=1}^{\tilde{k}} j_i \cdot s_i(a)
\end{equation*}  
for some $a$-reduced set of integers $\{ j_i \}_{i=1}^{\tilde{k}}$. Therefore the result follows setting $j_k := q$ and $j_i := 0$ for any $i \in [\tilde{k}+1, k-1]$. 
\end{proof}

\begin{lemma}\label{l_pre_5_b}
If 
\begin{equation*}
x = \sum_{i \in K} j_i \cdot t_i(a,b,c),
\end{equation*}
where $\{ j_i \}_{i \in K}$ is a set of positive integers indexed on a finite subset $K$ of $\N$, then
\begin{equation*}
x = \sum_{i \in \tilde{K}} \tilde{j}_i \cdot t_i(a,b,c)
\end{equation*}
for some $a$-reduced set of integers $\{ \tilde{j}_i \}_{i \in \tilde{K}}$ indexed on a finite subset $\tilde{K}$ of $\N$.
\end{lemma}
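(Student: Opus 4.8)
The plan is to rewrite the given representation into an $a$-reduced one by iterating a single algebraic substitution, controlled by a non-negative integer quantity that strictly decreases at every step.

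The substitution rests on the identity
$a\, t_i(a,b,c) + t_m(a,b,c) = t_{i+1}(a,b,c) + a\, t_{m-1}(a,b,c)$,
valid for all $i \in \N$ and all $m \in \N^*$: it follows from the relation $t_{k+1}(a,b,c) = a\, t_k(a,b,c) + b$ (an immediate consequence of the definition of $t_k(a,b,c)$, as already observed in the proof of Lemma~\ref{l_pre_0}) upon substituting $a\, t_i = t_{i+1} - b$ and $t_m - b = a\, t_{m-1}$. Recall also that $t_0(a,b,c) = c$.

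Given $x = \sum_{i \in K} j_i\, t_i(a,b,c)$, I first dispose of $a = 1$: with $N := \sum_{i \in K} j_i$ and $Q := \sum_{i \in K} i\, j_i$ one has $x = Nc + bQ$, so $x = j_0\, t_0(a,b,c)$ when $Q = 0$ (in which case $K = \{0\}$) and $x = (N-1)\, t_0(a,b,c) + t_Q(a,b,c)$ when $Q \geq 1$, and in both cases the displayed coefficient set is $a$-reduced. So assume $a \geq 2$, and at every stage discard the indices carrying coefficient $0$. As long as the current coefficient set fails to be $a$-reduced, one of the following holds: (1) $j_i \geq a+1$ for some $i \geq 1$ --- then apply the identity with $m = i$, replacing $a+1$ copies of $t_i(a,b,c)$ by $t_{i+1}(a,b,c) + a\, t_{i-1}(a,b,c)$; (2) $j_i \leq a$ for every $i \geq 1$, but $j_i = a$ for some $i \geq 2$ while $j_m \neq 0$ for some $m$ with $1 \leq m < i$ --- then apply the identity with these $i$ and $m$, replacing $a$ copies of $t_i(a,b,c)$ and one copy of $t_m(a,b,c)$ by $t_{i+1}(a,b,c) + a\, t_{m-1}(a,b,c)$. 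Each step produces another representation of $x$ by elements of $S(a,b,c)$ with non-negative coefficients, and a direct computation shows that the non-negative integer $\Psi := \sum_i i\, j_i$ strictly decreases at every step: it drops by $(a-1)(i+1-m)$, which is at least $a-1 \geq 1$ because $m \leq i$ in both (1) and (2). Hence the procedure terminates.

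When it terminates neither (1) nor (2) applies, i.e.\ every coefficient of index $\geq 1$ is $\leq a$, and whenever a coefficient of index $\geq 2$ equals $a$ all smaller positive indices have coefficient $0$; together with the facts that the largest index has positive coefficient (coefficients are kept positive) and that the requirement forced by a coefficient equal to $a$ is vacuous at index $1$, this is exactly the definition of an $a$-reduced set, so the representation obtained is the one sought (if $K = \{0\}$ initially, it is already $a$-reduced and nothing is needed). The one \emph{delicate point} is the case distinction above: one must check that (1) and (2) really cover every way a representation with positive coefficients can fail to be $a$-reduced, and that in each case enough copies of the generators involved are available for the rewrite; the identity, the computation of $\Psi$, and the termination argument are then routine.
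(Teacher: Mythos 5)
Your proof is correct and follows essentially the same route as the paper: both arguments rewrite the given representation by iterating the identity $a\,t_{i}(a,b,c)+t_{m}(a,b,c)=t_{i+1}(a,b,c)+a\,t_{m-1}(a,b,c)$ until the coefficient set becomes $a$-reduced. The only substantive difference is bookkeeping: you certify termination with the single decreasing quantity $\Psi=\sum_i i\,j_i$ (treating $a=1$ separately, where that quantity can stall), whereas the paper tracks $m(l)$ and the sum of the positive-index coefficients; your monovariant is, if anything, the cleaner of the two.
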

\begin{proof}
We notice that
\begin{equation*}
a \cdot t_{i_2} (a,b,c) + t_{i_1} (a,b,c) = t_{i_2+1} (a,b,c) + a \cdot t_{i_1-1} (a,b,c)
\end{equation*}
for any choice of positive integers $i_1$ and $i_2$ such that $i_1 \leq i_2$. 

We define $l:=0$, $K(l) := K$ and  $\tilde{j}_i := j_i$ for any $i \in K(l)$.

Then we enter the following iterative procedure.
\begin{enumerate}
\item If $\{ \tilde{j}_i \}_{i \in K(l)}$ is $a$-reduced, then we break the procedure, else we define 
\begin{align*}
M(l) & :=  \max \{ i \in K(l) : \tilde{j}_i \geq a \},\\
m(l) & :=  \min \{ i \in K(l) \backslash \{ 0 \} : \tilde{j}_i \not = 0 \}.
\end{align*}  
\item We set
\begin{align*}
\tilde{j}_{m(l)-1} & :=  \tilde{j}_{m(l)-1} + a,\\
\tilde{j}_{m(l)} & :=  \tilde{j}_{m(l)} - 1,\\
\tilde{j}_{M(l)} & :=  \tilde{j}_{M(l)} - a,\\
\tilde{j}_{M(l)+1} & :=  \tilde{j}_{M(l)+1} + 1.\\
\end{align*}
\item We set
\begin{align*}
K(l+1) & :=  K(l) \cup \{ m(l)-1, M(l)+1 \},\\
l & :=  l+1,
\end{align*}
and go to step (1).
\end{enumerate} 
We notice that for each $l$ we have that
\begin{equation*}
\dsum_{i \in K(l+1)} j_i = \dsum_{i \in K(l)} j_i
\end{equation*}
and at least one of the following holds:
\begin{displaymath}
m(l+1) = m(l) - 1  \quad \text{ or } \quad \dsum_{i \in K(l+1) \backslash \{ 0 \}} j_i < \dsum_{i \in K(l) \backslash \{ 0 \}} j_i.
\end{displaymath}
In particular, when $m(l) = 1$ for some integer $l$, we have that  
\begin{displaymath}
\dsum_{i \in K(l+1) \backslash \{ 0 \}} j_i < \dsum_{i \in K(l) \backslash \{ 0 \}} j_i.
\end{displaymath}
Therefore, after some iterations the procedure breaks. 

Hence, 
\begin{equation*}
x = \sum_{i \in \tilde{K}} \tilde{j}_i \cdot t_i(a,b,c),
\end{equation*}
where $\tilde{K} := K(l)$.
\end{proof}

\begin{example}
Suppose that
\begin{equation*}
a = 2, \quad b = 3, \quad c = 4.
\end{equation*}

Let $K = \{ 1, 2, 4 \}$ and 
\begin{equation*}
j_1 = 2, \quad j_2 = 4, \quad j_4 = 3.
\end{equation*}

We have that
\begin{align*}
t_0 (2,3,4) & =  4,\\
t_1 (2,3,4) & =  11,\\
t_2 (2,3,4) & =  25,\\
t_3 (2,3,4) & =  53,\\
t_4 (2,3,4) & =  109,\\
t_5 (2,3,4) & =  221.
\end{align*}

Let 
\begin{equation*}
x = \sum_{i \in K} j_i \cdot t_i (2,3,4) = 449.
\end{equation*}

We use the iterative procedure described in the proof of Lemma \ref{l_pre_5_b} with the aim to write 
\begin{equation*}
x = \sum_{i \in \tilde{K}} \tilde{j}_i \cdot t_i (2,3,4)
\end{equation*}
for some $a$-reduced set of integers $\{ \tilde{j}_i \}_{i \in \tilde{K}}$.

We set $l:=0$, $K(0):=K$ and $\tilde{j}_i = j_i$ for any $i \in K(0)$. 

Since $\{ \tilde{j}_i \}_{i \in K(0)}$ is not $a$-reduced, we define 
\begin{equation*}
M(0) := 4, \quad m(0) := 1.
\end{equation*}

Then we set
\begin{equation*}
\tilde{j}_{0} := 2, \quad \tilde{j}_{1} := 1, \quad \tilde{j}_{2} :=4, \quad \tilde{j}_{4} :=1, \quad \tilde{j}_{5} := 1,
\end{equation*}
and
\begin{align*}
K(1) & :=  \{0,1,2,4,5 \},\\
l & :=   1.
\end{align*}

Since $\{ \tilde{j}_i \}_{i \in K(1)}$ is not $a$-reduced, we define 
\begin{equation*}
M(1) := 2, \quad m(1) := 1.
\end{equation*}

Then we set
\begin{equation*}
\tilde{j}_{0} := 4, \quad \tilde{j}_{1} := 0, \quad \tilde{j}_{2} :=2, \quad \tilde{j}_3 := 1, \quad \tilde{j}_{4} :=1, \quad \tilde{j}_{5} := 1,
\end{equation*}
and
\begin{align*}
K(2) & :=  \{0,1,2,3,4,5 \},\\
l & :=   2.
\end{align*}

We notice that $\{ \tilde{j}_i \}_{i \in K(2)}$ is $a$-reduced and define $\tilde{K} := K(2)$.

We have that 
\begin{equation*}
x = \sum_{i \in \tilde{K}} \tilde{j}_i \cdot t_i (2,3,4).
\end{equation*}
\end{example}

\begin{lemma}\label{l_pre_6}
Let $k$ be a positive integer.

If $\{ j_i \}_{i=1}^k$ is an $a$-reduced set of integers, then
\begin{equation*}
\dsum_{i=1}^k j_i \cdot t_i(a,b,c) < t_{k+1} (a,b,c).
\end{equation*}
\end{lemma}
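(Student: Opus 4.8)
The plan is to argue by induction on $k$, exploiting the recursion $t_{k+1}(a,b,c) = a\cdot t_k(a,b,c)+b$ (which is just $\theta_{a,b}(t_k(a,b,c))$, already used in Lemma \ref{l_pre_0}). Before starting I would record the trivial monotonicity $t_j(a,b,c) < t_{j+1}(a,b,c)$ for every $j \in \N$, which follows at once from $t_{j+1}(a,b,c) = a\,t_j(a,b,c)+b$ together with $a\ge 1$, $b\ge 1$ and $t_j(a,b,c)>0$. For the base case $k=1$, an $a$-reduced family $\{j_1\}$ satisfies $1\le j_1\le a$, hence $j_1\cdot t_1(a,b,c)\le a\,t_1(a,b,c) < a\,t_1(a,b,c)+b = t_2(a,b,c)$.

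For the inductive step I would split into cases according to the top coefficient $j_k$, which is nonzero because $\{j_i\}_{i=1}^k$ is $a$-reduced. If $j_k=a$, the third defining property of an $a$-reduced family (applied at the index $k$) forces $j_i=0$ for all $i\in[1,k-1]$, so $\sum_{i=1}^k j_i\cdot t_i(a,b,c) = a\,t_k(a,b,c) < a\,t_k(a,b,c)+b = t_{k+1}(a,b,c)$. If $j_k\le a-1$ and $j_i=0$ for all $i\in[1,k-1]$, then the sum equals $j_k\cdot t_k(a,b,c) \le (a-1)\,t_k(a,b,c) < t_{k+1}(a,b,c)$. In the remaining case $j_k\le a-1$ and there is an index below $k$ with nonzero coefficient; set $k':=\max\{i\in[1,k-1]:j_i\ne 0\}$. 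One checks that $\{j_i\}_{i=1}^{k'}$ is again $a$-reduced, so the inductive hypothesis gives $\sum_{i=1}^{k'} j_i\cdot t_i(a,b,c) < t_{k'+1}(a,b,c) \le t_k(a,b,c)$, whence
\[
\sum_{i=1}^{k} j_i\cdot t_i(a,b,c) = j_k\cdot t_k(a,b,c) + \sum_{i=1}^{k'} j_i\cdot t_i(a,b,c) < (a-1)\,t_k(a,b,c) + t_k(a,b,c) = a\,t_k(a,b,c) < t_{k+1}(a,b,c).
\]

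The only point requiring a little care is the verification that the truncated family $\{j_i\}_{i=1}^{k'}$ inherits the $a$-reduced property: the bound $j_i\in[0,a]$ is clear, $j_{k'}\ne 0$ holds by the choice of $k'$, and the third condition survives because if $j_m=a$ for some $m\le k'$ then already in the original family all $j_i$ with $i<m$ vanish. Everything else is elementary arithmetic with the recursion, so I expect no serious obstacle.

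Alternatively, one could avoid induction by expanding $\sum_{i=1}^k j_i\cdot t_i(a,b,c) = c\sum_{i=1}^k j_i a^i + b\sum_{i=1}^k j_i s_i(a)$ and bounding the two sums separately (this mirrors the style of Lemma \ref{l_pre_5}); using that at most one coefficient can equal $a$, one obtains $\sum_{i=1}^k j_i a^i\le a^{k+1}$ and $\sum_{i=1}^k j_i s_i(a) < s_{k+1}(a)$, and the claim follows since $b\ge 1$. I find the inductive argument cleaner, but either route works.
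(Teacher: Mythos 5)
Your proof is correct and follows essentially the same route as the paper: induction on $k$ with the case split $j_k = a$ versus $j_k \le a-1$, using $t_{k+1}(a,b,c) = a\,t_k(a,b,c)+b$. Your extra care in truncating at $k' = \max\{i < k : j_i \neq 0\}$ and verifying that $\{j_i\}_{i=1}^{k'}$ is again $a$-reduced in fact tidies up a point the paper glosses over, since there the inductive hypothesis is invoked for $\{j_i\}_{i=1}^{k-1}$, which need not be $a$-reduced when $j_{k-1}=0$.
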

\begin{proof}
We prove the claim by induction on $k$. 

If $k=1$, then 
\begin{equation*}
\dsum_{i=1}^1 j_i \cdot t_i(a,b,c) \leq a \cdot t_1 (a,b,c) = t_2(a,b,c) - b < t_2 (a,b,c).
\end{equation*}

Let $k > 1$. We distinguish two cases.
\begin{itemize}
\item If $j_{k} = a$, then 
\begin{equation*}
\dsum_{i=1}^k j_i \cdot t_i(a,b,c) = a \cdot t_k (a,b,c) < t_{k+1} (a,b,c).
\end{equation*}

\item If $j_k \leq a-1$, then, by inductive hypothesis, we have that
\begin{align*}
\dsum_{i=1}^k j_i \cdot t_i(a,b,c) & =  \dsum_{i=1}^{k-1} j_i \cdot t_i(a,b,c) + j_k \cdot t_k (a,b,c)\\
& <  t_{k} (a,b,c) + (a-1) \cdot t_k (a,b,c) < t_{k+1} (a,b,c). \qedhere
\end{align*}
\end{itemize}
\end{proof}

\begin{lemma}\label{l_pre_7}
Suppose that
\begin{itemize}
\item $k_1$ and $k_2$ are two positive integers such that $k_1 \leq k_2$;
\item $\{ j_i \}_{i=1}^{k_1}$ and $\{ \tilde{j}_i \}_{i=1}^{k_2}$  are two different $a$-reduced sets of integers;
\item $x$ and $y$ are two integers such that
\begin{align*}
x & =  \sum_{i=1}^{k_1} j_i \cdot t_i (a,b,c),\\
y & =  \sum_{i=1}^{k_2} \tilde{j}_i \cdot  t_i (a,b,c).
\end{align*}
\end{itemize}

The following hold.
\begin{itemize}
\item If $\{ j_i \}_{i=1}^{k_1} \prec \{ \tilde{j}_i \}_{i=1}^{k_2}$, then $x < y$.
\item If $\{ \tilde{j}_i \}_{i=1}^{k_2} \prec \{ j_i \}_{i=1}^{k_1}$, then $y < x$.
\end{itemize}
\end{lemma}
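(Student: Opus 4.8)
The plan is to reduce the statement to a purely combinatorial comparison of the two $a$-reduced sequences via Lemma~\ref{l_pre_6}. By symmetry it suffices to prove the first bullet: assuming $\{ j_i \}_{i=1}^{k_1} \prec \{ \tilde j_i \}_{i=1}^{k_2}$, show $x < y$. The definition of $\prec$ splits into two cases, and I would handle them separately.

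\emph{Case $k_1 < k_2$.} Here $\{ j_i \}_{i=1}^{k_1}$ is an $a$-reduced set with top index $k_1$, so Lemma~\ref{l_pre_6} gives immediately
\begin{equation*}
x = \sum_{i=1}^{k_1} j_i \cdot t_i(a,b,c) < t_{k_1+1}(a,b,c) \leq t_{k_2}(a,b,c) \leq \sum_{i=1}^{k_2} \tilde j_i \cdot t_i(a,b,c) = y,
\end{equation*}
using that $k_1 + 1 \le k_2$, that $t_k(a,b,c)$ is (strictly) increasing in $k$, and that $\tilde j_{k_2} \ge 1$ since the set is $a$-reduced (together with all terms being non-negative). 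So this case is essentially free.

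\emph{Case $k_1 = k_2 =: k$.} Write $M := \max\{ i : j_i \ne \tilde j_i \}$, so $j_M < \tilde j_M$ and $j_i = \tilde j_i$ for $i > M$. Cancelling the common top part, it suffices to show $\sum_{i=1}^{M} j_i t_i < \sum_{i=1}^{M} \tilde j_i t_i$, i.e.\ that the difference $\sum_{i=1}^{M}(\tilde j_i - j_i) t_i$ is positive. The contribution of index $M$ is $(\tilde j_M - j_M)\, t_M \ge t_M$. The indices below $M$ contribute at least $-\sum_{i=1}^{M-1} j_i t_i$, and here I would invoke that $\{ j_i \}_{i=1}^{M-1}$, being a truncation of an $a$-reduced set with $j_{M} \neq 0$ (hence, if needed, pad with the constraint that $j_i\in[0,a]$), satisfies $\sum_{i=1}^{M-1} j_i t_i < t_M$ by Lemma~\ref{l_pre_6}. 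A small subtlety: the truncation $\{j_i\}_{i=1}^{M-1}$ need not itself be $a$-reduced if $j_{M-1}=0$ or if some $j_k=a$ with a nonzero entry below; but Lemma~\ref{l_pre_6}'s conclusion $\sum_{i=1}^{M-1} j_i t_i < t_{M}$ only needs $j_i\in[0,a]$ for all $i\le M-1$, which does hold, and this bound can be obtained either by a direct induction mirroring Lemma~\ref{l_pre_6} or by noting $\sum_{i=1}^{M-1} j_i t_i \le \sum_{i=1}^{M-1} a\, t_i$ and bounding that geometric-type sum against $t_M$. Combining,
\begin{equation*}
\sum_{i=1}^{M}(\tilde j_i - j_i)\, t_i \ge t_M - \sum_{i=1}^{M-1} j_i\, t_i > 0,
\end{equation*}
which yields $x < y$.

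\textbf{Main obstacle.} The only delicate point is the sub-$M$ estimate in the case $k_1=k_2$: one must be careful that the partial sequence $\{ j_i\}_{i=1}^{M-1}$ may fail to be $a$-reduced, so Lemma~\ref{l_pre_6} cannot be quoted verbatim. I expect the clean fix is to strengthen (or re-run the induction of) Lemma~\ref{l_pre_6} to the statement ``if $j_i\in[0,a]$ for $1\le i\le k$ then $\sum_{i=1}^k j_i t_i \le a\cdot\sum_{i=1}^k t_i < t_{k+1}$,'' which is exactly what is needed and which follows from the identity $t_{i+1}=a\,t_i + (\text{something}>0)$ telescoping appropriately. With that in hand both cases close with one-line inequalities.
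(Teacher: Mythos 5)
Your overall strategy is the paper's: split on $k_1<k_2$ (settled by Lemma \ref{l_pre_6}, the monotonicity of $k\mapsto t_k(a,b,c)$ and $\tilde j_{k_2}\ge 1$) versus $k_1=k_2$ (compare at the largest index of disagreement $M$ and bound the part of $x$ below $M$ by $t_M(a,b,c)$); this second chain of inequalities is exactly the one in the paper. You also correctly spot the one delicate point, which the paper itself passes over silently: the truncation $\{j_i\}_{i=1}^{M-1}$ need not be $a$-reduced, so Lemma \ref{l_pre_6} cannot be quoted verbatim for it.

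However, your proposed repair of that point is incorrect. It is not true that $j_i\in[0,a]$ for $1\le i\le k$ alone yields $\sum_{i=1}^{k} j_i\, t_i(a,b,c) < t_{k+1}(a,b,c)$: since $a\,t_i(a,b,c)=t_{i+1}(a,b,c)-b$, one has $a\sum_{i=1}^{k}t_i(a,b,c)=\sum_{i=2}^{k+1}t_i(a,b,c)-kb$, which exceeds $t_{k+1}(a,b,c)$ for every $k\ge 2$; concretely, with $a=2$, $b=3$, $c=4$ one gets $2\,(t_1+t_2)=72>53=t_3$. So the bound $\sum_{i=1}^{M-1} j_i t_i\le a\sum_{i=1}^{M-1} t_i< t_M$ and the strengthened lemma stated at the end of your proposal are false, and an induction run with only the box constraint $j_i\in[0,a]$ cannot prove them. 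What saves the argument is that the truncation inherits from $a$-reducedness the carry condition \emph{if $j_m=a$ for some $m\le M-1$, then $j_i=0$ for all $i<m$}, and the induction proving Lemma \ref{l_pre_6} uses only this condition together with $j_i\le a$: if $j_{M-1}=a$ the sum equals $a\,t_{M-1}(a,b,c)=t_M(a,b,c)-b$, while if $j_{M-1}\le a-1$ the inductive hypothesis on the shorter truncation gives a bound by $t_{M-1}(a,b,c)+(a-1)\,t_{M-1}(a,b,c)=a\,t_{M-1}(a,b,c)<t_M(a,b,c)$. Running that induction under the inherited hypothesis gives $\sum_{i=1}^{M-1} j_i\, t_i(a,b,c) < t_M(a,b,c)$, after which your (and the paper's) chain of inequalities closes the case $k_1=k_2$.
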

\begin{proof}
Without loss of generality we suppose that $\{ j_i \}_{i=1}^{k_1} \prec \{ \tilde{j}_i \}_{i=1}^{k_2}$.

If $k_1 < k_2$, then 
\begin{equation*}
x = \sum_{i=1}^{k_1} j_i \cdot t_i (a,b,c) < t_{k_1+1} (a,b,c)  \leq t_{k_2} (a,b,c) \leq y
\end{equation*}
according to Lemma \ref{l_pre_6}.

If $k_1 = k_2$, then we define $M:= \max \{i \in [1,k_2]: j_i \not= \tilde{j}_i \}$. 

Since $\{ j_i \}_{i=1}^{k_1} \prec \{ \tilde{j}_i \}_{i=1}^{k_2}$ we have that $j_M < \tilde{j}_M$. Therefore, 
\begin{align*}
x & =  \sum_{i=1}^{k_1} j_i \cdot  t_i (a,b,c) =  \sum_{i=1}^{k_2} j_i \cdot t_i (a,b,c)  \\
& =  \sum_{i=1}^{M-1} j_i \cdot t_i (a,b,c) + j_M \cdot t_M (a,b,c) + \sum_{i=M+1}^{k_2} j_i \cdot t_i (a,b,c) \\
& <  t_M (a,b,c) + j_M \cdot t_M (a,b,c) + \sum_{i=M+1}^{k_2} \tilde{j}_i \cdot t_i (a,b,c) \\
& \leq  \tilde{j}_M \cdot t_M (a,b,c) + \sum_{i=M+1}^{k_2} \tilde{j}_i \cdot t_i (a,b,c)\\
& \leq  \sum_{i=1}^{k_2} \tilde{j}_i \cdot  t_i (a,b,c) = y. \qedhere
\end{align*}

\end{proof}

In analogy with Lemma \ref{l_pre_6} and Lemma \ref{l_pre_7} we state two more lemmas, whose proofs (which are omitted) follow the same lines as the proofs of the two lemmas above.

\begin{lemma}\label{l_pre_8}
If $\{ j_i \}_{i=1}^k$ is an $a$-reduced set of integers for some positive integer $k$, then
\begin{equation*}
\dsum_{i=1}^k j_i \cdot s_i(a) < s_{k+1} (a).
\end{equation*}
\end{lemma}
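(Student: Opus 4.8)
The plan is to mimic the proof of Lemma~\ref{l_pre_6} verbatim, with the elementary identity $t_{k+1}(a,b,c) = a\,t_k(a,b,c)+b$ replaced by the corresponding identity for the $s_k$'s, namely $s_{k+1}(a) = 1 + a\cdot s_k(a)$ (equivalently $a\cdot s_k(a) = s_{k+1}(a)-1$), which is immediate from the definition $s_{k+1}(a)=\sum_{i=0}^{k}a^i$. We argue by induction on $k\in\N^*$.

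For the base step $k=1$ we have $\sum_{i=1}^{1} j_i\cdot s_i(a) = j_1\cdot s_1(a) = j_1 \leq a < 1+a = s_2(a)$, using that an $a$-reduced set indexed on $\{1\}$ satisfies $j_1\in[0,a]$ (indeed $j_1\neq 0$, but only the upper bound is needed here). For the inductive step, assume the statement for all smaller indices and let $k>1$. If $j_k=a$, then the third clause of the definition of $a$-reduced forces $j_i=0$ for all $i\in[1,k-1]$, hence $\sum_{i=1}^k j_i\cdot s_i(a) = a\cdot s_k(a) = s_{k+1}(a)-1 < s_{k+1}(a)$. If instead $j_k\leq a-1$, then the truncated family $\{j_i\}_{i=1}^{k-1}$ is either identically zero or, after restricting to its support $[1,k']$ with $k':=\max\{i<k:j_i\neq 0\}$, an $a$-reduced set to which the inductive hypothesis applies; in either case $\sum_{i=1}^{k-1} j_i\cdot s_i(a) < s_k(a)$ (trivially in the first case, and in the second because $\sum_{i=1}^{k'}j_i\cdot s_i(a) < s_{k'+1}(a)\leq s_k(a)$). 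Combining, $\sum_{i=1}^{k} j_i\cdot s_i(a) = \sum_{i=1}^{k-1} j_i\cdot s_i(a) + j_k\cdot s_k(a) < s_k(a) + (a-1)\cdot s_k(a) = a\cdot s_k(a) < s_{k+1}(a)$, which closes the induction.

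There is essentially no serious obstacle here; the only point requiring a word of care is the application of the inductive hypothesis in the case $j_k\leq a-1$, where the truncated family need not literally be $a$-reduced on the full index set $[1,k-1]$ and one must pass to its support first (exactly as is tacitly done in the proof of Lemma~\ref{l_pre_6}). Everything else is a routine manipulation of the geometric sums $s_k(a)$.
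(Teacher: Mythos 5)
Your proof is correct and is exactly what the paper intends: Lemma~\ref{l_pre_8} is stated with its proof omitted as "following the same lines" as Lemma~\ref{l_pre_6}, and your argument is precisely that induction with $t_{k+1}=a\,t_k+b$ replaced by $s_{k+1}(a)=1+a\,s_k(a)$, including the (correct) care about restricting the truncated family to its support before invoking the inductive hypothesis.
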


\begin{lemma}\label{l_pre_9}
Suppose that
\begin{itemize}
\item $k_1$ and $k_2$ are two positive integers such that $k_1 \leq k_2$;
\item $\{ j_i \}_{i=1}^{k_1}$ and $\{ \tilde{j}_i \}_{i=1}^{k_2}$  are two different $a$-reduced sets of integers;
\item $x$ and $y$ are two integers such that
\begin{align*}
x & =  \sum_{i=1}^{k_1} j_i \cdot  s_i (a),\\
y & =  \sum_{i=1}^{k_2} \tilde{j}_i \cdot  s_i (a).
\end{align*}
\end{itemize}

The following hold.
\begin{itemize}
\item If $\{ j_i \}_{i=1}^{k_1} \prec \{ \tilde{j}_i \}_{i=1}^{k_2}$, then $x < y$.
\item If $\{ \tilde{j}_i \}_{i=1}^{k_2} \prec \{ j_i \}_{i=1}^{k_1}$, then $y < x$.
\end{itemize}
\end{lemma}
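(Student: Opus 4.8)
The plan is to transcribe, \emph{mutatis mutandis}, the proof of Lemma~\ref{l_pre_7}, replacing $t_i(a,b,c)$ everywhere by $s_i(a)$ and invoking Lemma~\ref{l_pre_8} wherever that proof invoked Lemma~\ref{l_pre_6}. By the same symmetry observation made there, it is enough to assume $\{ j_i \}_{i=1}^{k_1} \prec \{ \tilde{j}_i \}_{i=1}^{k_2}$ and deduce $x < y$; the other implication then follows by exchanging the two sets, which is legitimate because $\{ \tilde{j}_i \}_{i=1}^{k_2} \prec \{ j_i \}_{i=1}^{k_1}$ together with $k_1 \leq k_2$ forces $k_1 = k_2$.

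First I would dispose of the case $k_1 < k_2$. Here Lemma~\ref{l_pre_8} gives $x = \sum_{i=1}^{k_1} j_i s_i(a) < s_{k_1+1}(a)$; since $s_{k+1}(a) = s_k(a) + a^k \geq s_k(a)$ for every $k$, the sequence $\big( s_k(a) \big)_k$ is non-decreasing, so $s_{k_1+1}(a) \leq s_{k_2}(a) \leq \tilde{j}_{k_2} s_{k_2}(a) \leq y$, the middle inequality holding because $\{ \tilde{j}_i \}_{i=1}^{k_2}$ is $a$-reduced and hence $\tilde{j}_{k_2} \geq 1$. Chaining these inequalities yields $x < y$.

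Next, for $k_1 = k_2 =: k$, set $M := \max\{ i \in [1,k] : j_i \neq \tilde{j}_i \}$ (well defined since the two $a$-reduced sets differ), so that $j_M < \tilde{j}_M$ by definition of $\prec$ and $j_i = \tilde{j}_i$ for $i > M$. The crux is the inequality $\sum_{i=1}^{M-1} j_i s_i(a) < s_M(a)$: letting $m < M$ be the largest index with $j_m \neq 0$, the truncated family $\{ j_i \}_{i=1}^{m}$ is again $a$-reduced --- all three defining conditions are inherited from $\{ j_i \}_{i=1}^{k_1}$ --- so Lemma~\ref{l_pre_8} and monotonicity of $s$ give $\sum_{i=1}^{M-1} j_i s_i(a) = \sum_{i=1}^{m} j_i s_i(a) < s_{m+1}(a) \leq s_M(a)$; and if no such $m$ exists the left side is $0 < s_M(a)$. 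Splitting $x = \sum_{i<M} j_i s_i(a) + j_M s_M(a) + \sum_{i>M} j_i s_i(a)$, then replacing $\sum_{i<M} j_i s_i(a)$ by $s_M(a)$, bounding $j_M + 1 \leq \tilde{j}_M$, using $j_i = \tilde{j}_i$ for $i > M$, and finally adding back the non-negative term $\sum_{i<M} \tilde{j}_i s_i(a)$, one reaches $x < \sum_{i=1}^{k} \tilde{j}_i s_i(a) = y$ --- the very chain of estimates appearing in the proof of Lemma~\ref{l_pre_7}.

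I do not expect a genuine obstacle: the argument is a mechanical translation of the proof of Lemma~\ref{l_pre_7}. The only delicate point is that the partial family $\{ j_i \}_{i=1}^{M-1}$ need not itself be $a$-reduced when $j_{M-1} = 0$ --- a gap also left implicit in the proof of Lemma~\ref{l_pre_7} --- and it is closed, as indicated above, by passing to the truncation at the largest nonzero coefficient, whose $a$-reducedness is inherited.
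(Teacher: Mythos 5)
Your proposal is correct and is precisely the route the paper intends: Lemma~\ref{l_pre_9} is stated with its proof omitted as a verbatim translation of Lemma~\ref{l_pre_7}, with $s_i(a)$ in place of $t_i(a,b,c)$ and Lemma~\ref{l_pre_8} in place of Lemma~\ref{l_pre_6}, which is exactly what you carry out. Your extra remark about truncating at the largest nonzero coefficient correctly patches the only step left implicit in that template (the family $\{ j_i \}_{i=1}^{M-1}$ need not be $a$-reduced when $j_{M-1}=0$), so nothing is missing.
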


\section{Proofs}\label{proofs}
\subsection{Proof of Theorem \ref{t_g_is_h}}
First, we notice that $H(a,b,c) \subseteq G_{a,b} (c)$ according to Lemma \ref{l_pre_0}.

According to Lemma \ref{l_pre_1}, $ H(a,b,c)$ is a subsemigroup of $(\N, +)$ closed with respect to the action of the map $\theta_{a,b}$.

Moreover, $\N \backslash  H(a,b,c)$ is finite. In fact, according to Lemma \ref{l_pre_3} we have that
\begin{equation*}
[t_k (a,b,c), + \infty[ \subseteq H(a,b,c)
\end{equation*} 
for any positive integer $k$ such that $s_k(a) \geq c-1$.  

Hence, $H(a,b,c)$ is a numerical semigroup and $G_{a,b} (c) = H(a,b,c)$.

\subsection{Proof of Theorem \ref{t_m_gen}}
According to Lemma \ref{l_pre_5} we have that
\begin{equation*}
t_k(a,b,c) \not \in \langle S_{k-1} (a,b,c) \rangle
\end{equation*}
for any positive integer $k < \tilde{k}$ . In fact, for any such $k$ we have that $s_k(a) \leq c-1$. 
Nevertheless, 
\begin{equation*}
t_{\tilde{k}} (a,b,c)  \in \langle S_{\tilde{k}-1} (a,b,c) \rangle.
\end{equation*}
The latter assertion holds since 
\begin{displaymath}
\begin{cases}
t_{\tilde{k}} (a,b,c) - a^{\tilde{k}} c = b(q c +r) \\
0 \leq r < c
\end{cases}
\end{displaymath}
for some non-negative integers $q$ and $r$. Since
\begin{equation*}
r \leq c-1 < s_{\tilde{k}} (a),
\end{equation*}
we have that
\begin{equation*}
a^{\tilde{k}} c + b r \in G_{a,b} (c)
\end{equation*}
according to Lemma \ref{l_pre_2}. 
Therefore,
\begin{equation*}
t_{\tilde{k}} (a,b,c) \in \langle S_{\tilde{k}-1} (a,b,c) \rangle, 
\end{equation*}
namely $S_{\tilde{k}-1} (a,b,c)$ is a minimal set of generators for $G_{a,b} (c)$.

\subsection{Proof of Theorem \ref{gen}}
We prove separately the $4$ assertions.
\begin{enumerate}[leftmargin=*]
\item Let $l \in [1,c-1]$. 
According to Lemma \ref{l_pre_5_a} there exists and is unique an $a$-reduced set $\{ j_i^{(l)} \}_{i=1}^{k_l}$ such that
\begin{equation*}
l = \sum_{i=1}^{k_l} j_i^{(l)} \cdot s_i(a).
\end{equation*}

Moreover,
\begin{align*}
x_l & =  \sum_{i=1}^{k_l} j_i^{(l)} \cdot t_i(a,b,c) \\
& =  \sum_{i=1}^{k_l} j_i^{(l)} \cdot a^i c + b \cdot \sum_{i=1}^{k_l} j_i^{(l)} \cdot s_i(a)\\
& \equiv  b l \pmod{c}.
\end{align*}

Let $x \in G_{a,b} (c)$. Since 
\begin{equation*}
\{bl: l \in [0,c-1] \}
\end{equation*}
is a set of representatives of the residue classes in $\Z / c \Z$, we can say that
\begin{equation*}
x \equiv b l \pmod{c}
\end{equation*}
for some $l \in [0,c-1]$. 

If $x \not \equiv 0 \pmod{c}$, then 
\begin{equation*}
x = \sum_{i \in \tilde{K}} \tilde{j}_i \cdot t_i(a,b,c)
\end{equation*}
for some $a$-reduced set of integers $\{ \tilde{j}_i \}_{i \in \tilde{K}}$, according to Lemma \ref{l_pre_5_b}.

We distinguish two cases.

\begin{itemize}
\item If $0 \in \tilde{K}$ and $\tilde{j}_0 \not = 0$, then
\begin{align*}
x & =  \tilde{j}_0 \cdot c + \sum_{i \in \tilde{K} \backslash \{ 0 \}} \tilde{j}_i \cdot t_i(a,b,c) \\
& >   \sum_{i \in \tilde{K} \backslash \{ 0 \}} \tilde{j}_i \cdot t_i(a,b,c) \equiv b l \pmod{c}.
\end{align*}
\item If $0 \not \in \tilde{K}$ or $\tilde{j}_0 = 0$, then $\{ j_i^{(l)} \}_{i=1}^{k_l} \preceq \{ \tilde{j}_i \}_{i \in \tilde{K}}$ and $x_l \leq x$ according to Lemma \ref{l_pre_7}.

Indeed, suppose by contradiction that $\{ \tilde{j}_i \}_{i \in \tilde{K}} \prec \{ j_i^{(l)} \}_{i=1}^{k_l}$. 

We notice that 
\begin{align*}
x & \equiv b \cdot \sum_{i \in \tilde{K}} \tilde{j}_i \cdot s_i(a) \equiv b l \pmod{c},
\end{align*}
namely
\begin{equation*}
\sum_{i \in \tilde{K}} \tilde{j}_i \cdot s_i(a) \equiv l \pmod{c}.
\end{equation*} 
This latter is absurd because
\begin{equation*}
1 \leq \sum_{i \in \tilde{K}} \tilde{j}_i \cdot s_i(a) < \sum_{i = 1}^{k_l} j_i^{(l)} \cdot s_i(a) = l
\end{equation*}
according to Lemma \ref{l_pre_9}. 
\end{itemize}

\item This assertion follows from (1).

\item Since $x_i < x_{c-1}$ for any $i \in [0, c-1[$, we have that $F( G_{a,b} (c)) = x_{c-1} - c$.

\item This assertion follows from Selmer's formulas.
\end{enumerate}

\bibliography{Refs}
\end{document}